\providecommand{\U}[1]{\protect \rule{.1in}{.1in}}
\newtheorem{theorem}{Theorem}[section]
\newtheorem{definition}[theorem]{Definition}
\newtheorem{proposition}[theorem]{Proposition}
\newtheorem{remark}[theorem]{Remark}
\newenvironment{proof}[1][Proof]{\noindent \textbf{#1.} }{\  $\Box$}
\numberwithin{equation}{section}
\begin{document}

\title{Forward-backward stochastic differential equations driven by $G$-Brownian
motion under weakly coupling condition}
\author{Xiaojuan Li\thanks{Zhongtai Securities Institute for Financial Studies,
Shandong University, Jinan 250100, China, lxj110055@126.com. Research
supported by Natural Science Foundation of Shandong Province (No.
ZR2014AP005). } }
\maketitle

\textbf{Abstract}. In this paper, we obtain the existence and uniqueness
theorem of $L^{p}$-solution for coupled forward-backward stochastic
differential equations driven by $G$-Brownian motion ($G$-FBSDEs) with
arbitrary $T$ under weakly coupling condition. Specially, the result for
$p\in(1,2)$ is completely different from the one for $p\geq2$. Furthermore, by
considering the dual linear FBSDE under a suitable reference probability, we
establish the comparison theorem for $G$-FBSDEs under weakly coupling condition.

{\textbf{Key words}. } $G$-expectation; $G$-Brownian motion; Backward
stochastic differential equation; Comparison theorem

\textbf{AMS subject classifications.} 60H10

\addcontentsline{toc}{section}{\hspace*{1.8em}Abstract}

\section{Introduction}

The classical fully coupled forward-backward stochastic differential equation
(FBSDE) has the following form%
\begin{equation}
\left \{
\begin{array}
[c]{rl}%
dX_{t}= & b(t,X_{t},Y_{t},Z_{t})dt+\sigma(t,X_{t},Y_{t},Z_{t})dW_{t},\\
dY_{t}= & f(t,X_{t},Y_{t},Z_{t})dt+Z_{t}dW_{t},\\
X_{0}= & x_{0},\text{ }Y_{T}=\phi(X_{T}),
\end{array}
\right.  \label{e-0-1}%
\end{equation}
where $W$ is classical standard Brownian motion. There are many literatures to
study the existence and uniqueness of the solution to FBSDE (\ref{e-0-1}).
Antonelli \cite{An} first obtained the existence and uniqueness result by
fixed point approach for small $T$. Ma et al. \cite{MPY} introduced the four
step scheme to first obtain the existence and uniqueness theorem for arbitrary
$T$. Hu, Peng \cite{HuP} and Yong \cite{Yong} introduced the method of
continuation to study FBSDE (\ref{e-0-1}). Pardoux and Tang \cite{P-T}
obtained the existence and uniqueness theorem for arbitrary $T$ by fixed point
approach under weakly coupling condition. For more results on this topic, the
reader may refer to \cite{De, MWZZ, PW} and the references therein. The
applications of the theory of FBSDEs in finance can be found in Ma and Yong's
book \cite{MY}. Wu \cite{Wu} studied the comparison theorem for FBSDE
(\ref{e-0-1}) by duality method (see also \cite{HJX, HJX1}).

Motivated by volatility uncertainty in finance (see \cite{ALP, Ly}), Peng
\cite{P07a, P08a} introduced a type of consistent sublinear expectation,
called the $G$-expectation $\mathbb{\hat{E}}[\cdot]$. The related $G$-Brownian
motion $B$ and It\^{o}'s calculus with respect to $B$ were constructed.
Moreover, the theory of stochastic differential equation driven by
$G$-Brownian motion ($G$-SDE) has been established.

Hu et al. \cite{HJPS1} studied the backward stochastic differential equation
driven by $G$-Brownian motion ($G$-BSDE). The theory of quadratic $G$-BSDE has
been established in \cite{HYH}, and the wellposedness of a type of
multi-dimensional $G$-BSDE can be found in \cite{Liu}. Soner et al.
\cite{STZ11} (see also \cite{CST}) studied a new type of fully nonlinear BSDE,
called $2$BSDE, by different formulation and method. The theory of $2$BSDE
with random terminal time has been obtained in \cite{LRT}.

Recently, Lu and Song \cite{LS}, and Zheng \cite{Zh} studied the following
coupled forward-backward stochastic differential equation driven by
$G$-Brownian motion ($G$-FBSDE):%
\begin{equation}
\left \{
\begin{array}
[c]{rl}%
dX_{t}= & b(t,X_{t},Y_{t})dt+h(t,X_{t},Y_{t})d\langle B\rangle_{t}%
+\sigma(t,X_{t},Y_{t})dB_{t},\\
dY_{t}= & f(t,X_{t},Y_{t},Z_{t})dt+g(t,X_{t},Y_{t},Z_{t})d\langle B\rangle
_{t}+Z_{t}dB_{t}+dK_{t},\\
X_{0}= & x_{0}\in \mathbb{R}^{n},\text{ }Y_{T}=\phi(X_{T})\in \mathbb{R}.
\end{array}
\right.  \label{e-0-2}%
\end{equation}
By fixed point approach, they obtained that $G$-FBSDE (\ref{e-0-2}) has a
unique $L^{2}$-solution $(X,Y,Z,K)$ for small $T$. Wang and Yuan \cite{WY}
studied the minimal solution of $G$-FBSDE (\ref{e-0-2}) with monotone
coefficients under the assumption that $\sigma(\cdot)$ is independent of $Y$
and $n=1$.

In this paper, we first study the $L^{p}$-solution of $G$-FBSDE (\ref{e-0-2})
for arbitrary $T$ under weakly coupling condition. By fixed point approach, we
obtain that $G$-FBSDE (\ref{e-0-2}) has a unique $L^{p}$-solution $(X,Y,Z,K)$
with $p\geq2$ for arbitrary $T$ under weakly coupling condition. But for
$p\in(1,2)$, in order to get contractive mapping for $\hat{X}$, we need the
assumption that $\sigma(\cdot)$ does not depend on $Y$. The key reason is that
the Doob inequality for $G$-martingale (see \cite{STZ, Song11}) is different
from the classical case and
\[
\left(  \int_{0}^{T}|\hat{Y}_{t}|^{2}dt\right)  ^{p/2}\leq C\int_{0}^{T}%
|\hat{Y}_{t}|^{p}dt
\]
does not hold for $p\in(1,2)$.

It is well known that the comparison theorem plays an important role in the
theory of BSDEs. So, the other purpose of this paper is to establish the
comparison theorem for $G$-FBSDEs under weakly coupling condition. The key
point to prove the comparison theorem is to slove the linear $G$-FBSDE. Since
the solvability of the dual linear $G$-FBSDE is unknown, we cannot use the
method in \cite{HJPS} to prove the comparison theorem. In order to overcome
this difficulty, we must choose a suitable reference probability $P^{\ast}$
and consider the dual linear FBSDE under $P^{\ast}$. The BSDE in this dual
equation is different from the one in (\ref{e-0-1}) and studied in \cite{EH}.
By fixed point approach under weakly coupling condition, we can still obtain
the solvability of this dual linear FBSDE under $P^{\ast}$. Based on this, we
can further obtain the comparison theorem.

The paper is organized as follows. In Section 2, we recall some basic results
of $G$-expectations, $G$-SDEs and $G$-BSDEs. The existence and uniqueness
theorem, and the related estimates of $L^{p}$-solution for $G$-FBSDEs have
been established in Section 3. In Section 4, we obtain the comparison theorem
for $G$-FBSDEs. \ 

\section{Preliminaries}

We recall some basic results of $G$-expectations, $G$-SDEs and $G$-BSDEs. The
readers may refer to Peng's book \cite{P2019}, \cite{HJPS1} and \cite{HJPS}
for more details.

Let $T>0$ be given and let $\Omega_{T}=C_{0}([0,T];\mathbb{R}^{d})$ be the
space of $\mathbb{R}^{d}$-valued continuous functions on $[0,T]$ with
$\omega_{0}=0$. The canonical process $B_{t}(\omega):=\omega_{t}$, for
$\omega \in \Omega_{T}$ and $t\in \lbrack0,T]$. For any fixed $t\leq T$, set%
\[
Lip(\Omega_{t}):=\{ \varphi(B_{t_{1}},B_{t_{2}}-B_{t_{1}},\ldots,B_{t_{N}%
}-B_{t_{N-1}}):N\geq1,t_{1}<\cdots<t_{N}\leq t,\varphi \in C_{b.Lip}%
(\mathbb{R}^{d\times N})\},
\]
where $C_{b.Lip}(\mathbb{R}^{d\times N})$ denotes the space of bounded
Lipschitz functions on $\mathbb{R}^{d\times N}$.

Let $G:\mathbb{S}_{d}\rightarrow \mathbb{R}$ be a given monotonic and sublinear
function, where $\mathbb{S}_{d}$ denotes the set of $d\times d$ symmetric
matrices. In this paper, we only consider non-degenerate $G$, i.e., there
exists a $\gamma>0$ such that
\[
G(A)-G(B)\geq \frac{\gamma}{2}\mathrm{tr}[A-B]\text{ for }A\geq B\text{.}%
\]

Peng \cite{P07a, P08a} constructed a consistent sublinear expectation space
$(\Omega_{T},Lip(\Omega_{T}),\mathbb{\hat{E}},(\mathbb{\hat{E}}_{t}%
)_{t\in \lbrack0,T]})$, called $G$-expectation space, such that, for $0\leq
t<s\leq T$, $\xi_{i}\in Lip(\Omega_{t})$, $i\leq m$, $\varphi \in
C_{b.Lip}(\mathbb{R}^{m+d})$,
\[
\mathbb{\hat{E}}_{t}\left[  \varphi(\xi_{1},\ldots,\xi_{m},B_{s}%
-B_{t})\right]  =\psi(\xi_{1},\ldots,\xi_{m}),
\]
where $\psi(x_{1},\ldots,x_{m})=u(s-t,0)$, $u$ is the solution of the
following $G$-heat equation:%
\[
\partial_{t}u-G(D_{x}^{2}u)=0,\ u(0,x)=\varphi(x_{1},\ldots,x_{m},x).
\]
The canonical process $(B_{t})_{t\in \lbrack0,T]}$ is called the $G$-Brownian
motion under $\mathbb{\hat{E}}$.

For each $t\in \lbrack0,T]$, denote by $L_{G}^{p}(\Omega_{t})$ the completion
of $Lip(\Omega_{t})$ under the norm $||X||_{L_{G}^{p}}:=(\mathbb{\hat{E}%
}[|X|^{p}])^{1/p}$ for $p\geq1$. It is clear that $\mathbb{\hat{E}}_{t}$ can
be continuously extended to $L_{G}^{1}(\Omega_{T})$ under the norm
$||\cdot||_{L_{G}^{1}}$.

\begin{definition}
A process $(M_{t})_{t\leq T}$ is called a $G$-martingale if $M_{T}\in
L_{G}^{1}(\Omega_{T})$ and $\mathbb{\hat{E}}_{t}[M_{T}]=M_{t}$ for $t\leq T$.
\end{definition}

The following theorem is the representation theorem of $G$-expectation.

\begin{theorem}
(\cite{DHP11, HP09}) \label{th-2-1}There exists a unique weakly compact and
convex set of probability measures $\mathcal{P}$ on $(\Omega_{T}%
,\mathcal{B}(\Omega_{T}))$ such that%
\[
\mathbb{\hat{E}}[X]=\sup_{P\in \mathcal{P}}E_{P}[X]\text{ for all }X\in
L_{G}^{1}(\Omega_{T}),
\]
where $\mathcal{B}(\Omega_{T})=\sigma(B_{s}:s\leq T)$.
\end{theorem}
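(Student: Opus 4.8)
The plan is to follow the classical route of \cite{DHP11, HP09}: first represent $\mathbb{\hat{E}}$ on a space of bounded continuous functions by a family of \emph{linear} functionals via Hahn--Banach, then upgrade those functionals to genuine (countably additive) probability measures using a continuity-from-above property, and finally transport the representation to $L_G^1(\Omega_T)$ while extracting compactness and uniqueness. For the first part I would note that $|\mathbb{\hat{E}}[X]-\mathbb{\hat{E}}[Y]|\leq\mathbb{\hat{E}}[|X-Y|]\leq\|X-Y\|_\infty$ on $Lip(\Omega_T)$, so $\mathbb{\hat{E}}$ extends to a sublinear expectation on the sup-norm closure $C_b(\Omega_T)$ of $Lip(\Omega_T)$; on this lattice $\mathbb{\hat{E}}$ is sublinear, monotone and normalized, so Hahn--Banach produces, for each $X$, a linear functional dominated by $\mathbb{\hat{E}}$ and agreeing with it at $X$. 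The collection $\Theta$ of all such functionals $E_\theta$ then satisfies $E_\theta\leq\mathbb{\hat{E}}$, $E_\theta[1]=1$, $E_\theta\geq 0$ (positivity from $-E_\theta[X]=E_\theta[-X]\leq\mathbb{\hat{E}}[-X]\leq 0$ for $X\geq 0$), and $\mathbb{\hat{E}}[X]=\sup_{\theta\in\Theta}E_\theta[X]$ for all $X\in C_b(\Omega_T)$.

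Next I would establish that $\mathbb{\hat{E}}$ is continuous from above on $C_b(\Omega_T)$, i.e. $X_n\in C_b(\Omega_T)$ with $X_n\downarrow 0$ pointwise implies $\mathbb{\hat{E}}[X_n]\downarrow 0$. This should come from tightness of the law of the canonical process --- concretely from the Kolmogorov-type moment bound $\mathbb{\hat{E}}[|B_t-B_s|^{2m}]\leq C_m|t-s|^m$ furnished by the $G$-heat equation --- combined with a Dini/Prokhorov argument. Granting this, each $E_\theta\leq\mathbb{\hat{E}}$ inherits continuity from above at $0$, so by the Daniell--Stone representation theorem $E_\theta$ is integration against a unique Borel probability measure $P_\theta$ on $(\Omega_T,\mathcal{B}(\Omega_T))$; hence $\mathbb{\hat{E}}[X]=\sup_{\theta\in\Theta}E_{P_\theta}[X]$ for $X\in C_b(\Omega_T)$, in particular for $X\in Lip(\Omega_T)$.

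I would then put $\mathcal{P}:=\{P\ \text{Borel probability on }\Omega_T:\ E_P[X]\leq\mathbb{\hat{E}}[X]\ \text{for all }X\in C_b(\Omega_T)\}$, which is convex, weakly closed and contains every $P_\theta$, hence still gives $\mathbb{\hat{E}}[X]=\sup_{P\in\mathcal{P}}E_P[X]$ on $C_b(\Omega_T)$. For $X\in L_G^1(\Omega_T)$, approximating by $X_m\in Lip(\Omega_T)$ in the $L_G^1$-norm and using $|E_P[X]-E_P[X_m]|\leq\mathbb{\hat{E}}[|X-X_m|]$ uniformly in $P\in\mathcal{P}$ together with $|\mathbb{\hat{E}}[X]-\mathbb{\hat{E}}[X_m]|\leq\mathbb{\hat{E}}[|X-X_m|]$ transfers the identity to all of $L_G^1(\Omega_T)$. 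For weak compactness I would use the same moment bound to produce a lower-semicontinuous $\rho\geq 0$ on $\Omega_T$ with compact sublevel sets and $\mathbb{\hat{E}}[\rho]<\infty$; then $\sup_{P\in\mathcal{P}}P(\rho>N)\leq N^{-1}\mathbb{\hat{E}}[\rho]\to 0$ yields tightness, so $\mathcal{P}$ is relatively weakly compact by Prokhorov and, being weakly closed, weakly compact. For uniqueness, any weakly compact convex representing set $\mathcal{P}'$ satisfies $\mathcal{P}'\subseteq\mathcal{P}$ by definition; if some $P_0\in\mathcal{P}\setminus\mathcal{P}'$ existed, strict separation of the point $P_0$ from the weakly compact convex set $\mathcal{P}'$ would give $X\in C_b(\Omega_T)$ with $E_{P_0}[X]>\sup_{P\in\mathcal{P}'}E_P[X]=\mathbb{\hat{E}}[X]$, contradicting $P_0\in\mathcal{P}$; hence $\mathcal{P}'=\mathcal{P}$.

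The main obstacle is the continuity-from-above step: passing from merely finitely additive linear functionals to countably additive probability measures is exactly where the probabilistic content --- tightness of $G$-Brownian paths via regularity of the $G$-heat equation --- has to be injected, while the rest is essentially soft functional analysis (Hahn--Banach, Daniell--Stone, Prokhorov, separation).
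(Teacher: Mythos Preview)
The paper does not prove this statement at all: Theorem~\ref{th-2-1} is quoted from \cite{DHP11, HP09} and used as a black box, so there is no ``paper's own proof'' to compare against. Your sketch is correct and is precisely the strategy of those references---Hahn--Banach on $C_b(\Omega_T)$, regularity (continuity from above) of $\mathbb{\hat E}$ via the moment bound for $G$-Brownian increments, Daniell--Stone to pass to countably additive measures, Prokhorov for weak compactness, and separation for uniqueness---with the key probabilistic input correctly identified.
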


The capacity associated to $\mathcal{P}$ is defined by
\[
c(A):=\sup_{P\in \mathcal{P}}P(A)\text{ for }A\in \mathcal{B}(\Omega_{T}).
\]
A set $A\in \mathcal{B}(\Omega_{T})$ is polar if $c(A)=0$. A property holds
\textquotedblleft quasi-surely" (q.s. for short) if it holds outside a polar
set. In the following, we do not distinguish two random variables $X$ and $Y$
if $X=Y$ q.s.

In order to study $G$-FBSDE, we need the following spaces and norms.

\begin{itemize}
\item $M^{0}(0,T):=\left \{  \eta_{t}=\sum_{i=0}^{N-1}\xi_{i}I_{[t_{i}%
,t_{i+1})}(t):N\in \mathbb{N}\text{, }0=t_{0}<\cdots<t_{N}=T,\text{ }\xi_{i}\in
Lip(\Omega_{t_{i}})\right \}  $;

\item $||\eta||_{M_{G}^{\bar{p},p}(0,T)}:=\left(  \mathbb{\hat{E}}\left[
\left(  \int_{0}^{T}|\eta_{t}|^{\bar{p}}dt\right)  ^{p/\bar{p}}\right]
\right)  ^{1/p}$ for $\bar{p}$, $p>0$;

\item $M_{G}^{\bar{p},p}(0,T):=\left \{  \text{the completion of }%
M^{0}(0,T)\text{ under the norm }||\cdot||_{M_{G}^{\bar{p},p}(0,T)}\right \}  $
for $\bar{p}$, $p\geq1$;

\item $S^{0}(0,T):=\left \{  h(t,B_{t_{1}\wedge t},\ldots,B_{t_{N}\wedge
t}):N\in \mathbb{N}\text{, }0<t_{1}<\cdots<t_{N}=T,\text{ }h\in C_{b.Lip}%
(\mathbb{R}^{1+dN})\right \}  $;

\item $||\eta||_{S_{G}^{p}(0,T)}:=\left(  \mathbb{\hat{E}}\left[  \sup_{t\leq
T}|\eta_{t}|^{p}\right]  \right)  ^{1/p}$ for $p>0$;

\item $S_{G}^{p}(0,T):=\left \{  \text{the completion of }S^{0}(0,T)\text{
under the norm }||\cdot||_{S_{G}^{p}(0,T)}\right \}  $ for $p\geq1$.
\end{itemize}

For each $\eta^{i}\in M_{G}^{2,p}(0,T)$ with $p\geq1$, $i=1,\ldots,d$, denote
$\eta=(\eta^{1},\ldots,\eta^{d})^{T}\in M_{G}^{2,p}(0,T;\mathbb{R}^{d})$, the
$G$-It\^{o} integral $\int_{0}^{T}\eta_{t}^{T}dB_{t}$ is well defined. Similar
for $L_{G}^{p}(\Omega_{t};\mathbb{R}^{n})$ and $S_{G}^{p}(0,T;\mathbb{R}^{n})$.

For simplicity of presentation, we suppose $d=1$ throughout the paper. The
results still hold for $d>1$. Under this case, the non-degenerate $G$ is%
\[
G(a)=\frac{1}{2}(\bar{\sigma}^{2}a^{+}-\underline{\sigma}^{2}a^{-})\text{ for
}a\in \mathbb{R},
\]
where $0<\underline{\sigma}\leq \bar{\sigma}<\infty$. If $\underline{\sigma
}=\bar{\sigma}$, then $\bar{\sigma}^{-1}B$ is a classical standard Brownian
motion. So we suppose $\underline{\sigma}<\bar{\sigma}$ in the following.

Let $\langle B\rangle$ be the quadratic variation process of $B$. By Corollary
3.5.5 in Peng \cite{P2019}, we have%
\begin{equation}
\underline{\sigma}^{2}s\leq \langle B\rangle_{t+s}-\langle B\rangle_{t}\leq
\bar{\sigma}^{2}s\text{ for each }t\text{, }s\geq0. \label{e-2-1}%
\end{equation}
Since $B$ is a martingale under each $P\in \mathcal{P}$, by Theorem
\ref{th-2-1} and the Burkholder-Davis-Gundy inequality, for each $p>0$ and
$||\eta||_{M_{G}^{2,p}(0,T)}<\infty$, there exists a constant $C(p)>0$ such
that%
\begin{equation}
\mathbb{\hat{E}}\left[  \sup_{t\leq T}\left \vert \int_{0}^{t}\eta_{s}%
dB_{s}\right \vert ^{p}\right]  \leq C(p)\mathbb{\hat{E}}\left[  \left(
\int_{0}^{T}|\eta_{s}|^{2}d\langle B\rangle_{s}\right)  ^{p/2}\right]
\leq \bar{\sigma}^{p}C(p)\mathbb{\hat{E}}\left[  \left(  \int_{0}^{T}|\eta
_{s}|^{2}ds\right)  ^{p/2}\right]  . \label{e-2-2}%
\end{equation}

In the following, we consider the following $G$-FBSDE:%
\begin{equation}
\left \{
\begin{array}
[c]{rl}%
dX_{t}= & b(t,X_{t},Y_{t})dt+h(t,X_{t},Y_{t})d\langle B\rangle_{t}%
+\sigma(t,X_{t},Y_{t})dB_{t},\\
dY_{t}= & f(t,X_{t},Y_{t},Z_{t})dt+g(t,X_{t},Y_{t},Z_{t})d\langle B\rangle
_{t}+Z_{t}dB_{t}+dK_{t},\\
X_{0}= & x_{0}\in \mathbb{R}^{n},\text{ }Y_{T}=\phi(X_{T}),
\end{array}
\right.  \label{e-2-3}%
\end{equation}
where $b$, $h$, $\sigma:[0,T]\times \Omega_{T}\times \mathbb{R}^{n}%
\times \mathbb{R}\rightarrow \mathbb{R}^{n}$, $f$, $g:[0,T]\times \Omega
_{T}\times \mathbb{R}^{n}\times \mathbb{R}\times \mathbb{R}\rightarrow \mathbb{R}%
$, $\phi:\Omega_{T}\times \mathbb{R}^{n}\rightarrow \mathbb{R}$. We need the
following assumptions:

\begin{description}
\item[(H1)] There exists a $\beta>1$ such that $b(\cdot,x,y)$, $h(\cdot
,x,y)\in M_{G}^{1,\beta}(0,T;\mathbb{R}^{n})$, $\sigma(\cdot,x,y)\in
M_{G}^{2,\beta}(0,T;\mathbb{R}^{n})$, $f(\cdot,x,y,z)$, $g(\cdot,x,y,z)\in
M_{G}^{1,\beta}(0,T)$ and $\phi(x)\in L_{G}^{\beta}(\Omega_{T})$ for each
$(x,y,z)\in \mathbb{R}^{n+2}$;

\item[(H2)] There exist constants $L_{i}>0$, $i=1$, $2$, $3$, such that, for
each $t\leq T$, $\omega \in \Omega_{T}$, $x$, $x^{\prime}\in \mathbb{R}^{n}$,
$y$, $y^{\prime}$, $z$, $z^{\prime}\in \mathbb{R}$,%
\[%
\begin{array}
[c]{l}%
|b_{j}(t,x,y)-b_{j}(t,x^{\prime},y^{\prime})|+|h_{j}(t,x,y)-h_{j}(t,x^{\prime
},y^{\prime})|+|\sigma_{j}(t,x,y)-\sigma_{j}(t,x^{\prime},y^{\prime})|\\
\leq L_{1}|x-x^{\prime}|+L_{2}|y-y^{\prime}|,\text{ for }j=1,\ldots,n,\\
|f(t,x,y,z)-f(t,x^{\prime},y^{\prime},z^{\prime})|+|g(t,x,y,z)-g(t,x^{\prime
},y^{\prime},z^{\prime})|\\
\leq L_{3}|x-x^{\prime}|+L_{1}(|y-y^{\prime}|+|z-z^{\prime}|),\\
|\phi(x)-\phi(x^{\prime})|\leq L_{3}|x-x^{\prime}|,
\end{array}
\]
where $b(\cdot)=(b_{1}(\cdot),\ldots,b_{n}(\cdot))^{T}$, $h(\cdot
)=(h_{1}(\cdot),\ldots,h_{n}(\cdot))^{T}$, $\sigma(\cdot)=(\sigma_{1}%
(\cdot),\ldots,\sigma_{n}(\cdot))^{T}$.
\end{description}

Now we give the $L^{p}$-solution of $G$-FBSDE (\ref{e-2-3}), similar for
$G$-SDE and $G$-BSDE.

\begin{definition}
For each fixed $p\in(1,\beta)$, $(X,Y,Z,K)$ is called an $L^{p}$-solution of
$G$-FBSDE (\ref{e-2-3}) if the following properties hold:

\begin{description}
\item[(i)] $X\in S_{G}^{p}(0,T;\mathbb{R}^{n})$, $Y\in S_{G}^{p}(0,T)$, $Z\in
M_{G}^{2,p}(0,T)$, $K$ is a non-increasing $G$-martingale with $K_{0}=0$ and
$K_{T}\in L_{G}^{p}(\Omega_{T})$;

\item[(ii)] $(X,Y,Z,K)$ satisfies $G$-FBSDE (\ref{e-2-3}).
\end{description}
\end{definition}

The following is the standard estimates of $G$-SDE and $G$-BSDE.

\begin{theorem}
\label{th-2-2}Suppose assumptions (H1) and (H2) hold. For each $p\in(1,\beta)$
and $(y_{t}^{(i)})_{t\leq T}\in S_{G}^{p}(0,T)$, $i=1$, $2$. Let $(X_{t}%
^{(i)})_{t\leq T}\in S_{G}^{p}(0,T;\mathbb{R}^{n})$ be the solution of $G$-SDE%
\[
dX_{t}^{(i)}=b(t,X_{t}^{(i)},y_{t}^{(i)})dt+h(t,X_{t}^{(i)},y_{t}%
^{(i)})d\langle B\rangle_{t}+\sigma(t,X_{t}^{(i)},y_{t}^{(i)})dB_{t},\text{
}X_{0}^{(i)}=x_{0},
\]
for $i=1$, $2$. Then there exists a deterministic function $C_{1}%
(p,T,L_{1},\bar{\sigma})>0$, which is continuous in $p$, such that%
\begin{equation}
\mathbb{\hat{E}}\left[  \sup_{t\leq T}\left \vert X_{t}^{(1)}-X_{t}%
^{(2)}\right \vert ^{p}\right]  \leq C_{1}(p,T,L_{1},\bar{\sigma}%
)\mathbb{\hat{E}}\left[  \left(  \int_{0}^{T}(|\hat{b}_{t}|+|\hat{h}%
_{t}|)dt\right)  ^{p}+\left(  \int_{0}^{T}|\hat{\sigma}_{t}|^{2}dt\right)
^{p/2}\right]  , \label{e-2-4}%
\end{equation}
where $\hat{b}_{t}=b(t,X_{t}^{(2)},y_{t}^{(1)})-b(t,X_{t}^{(2)},y_{t}^{(2)})$,
$\hat{h}_{t}=h(t,X_{t}^{(2)},y_{t}^{(1)})-h(t,X_{t}^{(2)},y_{t}^{(2)})$,
$\hat{\sigma}_{t}=\sigma(t,X_{t}^{(2)},y_{t}^{(1)})-\sigma(t,X_{t}^{(2)}%
,y_{t}^{(2)})$.
\end{theorem}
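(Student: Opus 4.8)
The plan is to derive (\ref{e-2-4}) by a standard Gronwall-type argument for the difference $\hat X_t := X_t^{(1)} - X_t^{(2)}$, carefully tracking the constant's dependence on $p$, $T$, $L_1$ and $\bar\sigma$. First I would write
\[
\hat X_t = \int_0^t \bigl(b(s,X_s^{(1)},y_s^{(1)}) - b(s,X_s^{(2)},y_s^{(2)})\bigr)\,ds + \int_0^t \bigl(h(\cdots)\bigr)\,d\langle B\rangle_s + \int_0^t \bigl(\sigma(\cdots)\bigr)\,dB_s,
\]
and split each integrand $\Phi(s,X_s^{(1)},y_s^{(1)}) - \Phi(s,X_s^{(2)},y_s^{(2)})$ into the "state" part $\Phi(s,X_s^{(1)},y_s^{(1)}) - \Phi(s,X_s^{(2)},y_s^{(1)})$ and the "driving" part $\Phi(s,X_s^{(2)},y_s^{(1)}) - \Phi(s,X_s^{(2)},y_s^{(2)})$; the latter is exactly $\hat b_s,\hat h_s,\hat\sigma_s$, and by (H2) the former is bounded by $L_1|\hat X_s|$ (note $L_2$ drops out because $y^{(1)}$ is used in both arguments). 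Then I would raise to the power $p$, apply $\sup_{t\le T}$, take $\mathbb{\hat E}$, and estimate each term: the $dt$ and $d\langle B\rangle_t$ terms by Hölder in time (picking up a $T$-power), using (\ref{e-2-1}) to replace $d\langle B\rangle_s$ by $\bar\sigma^2\,ds$, and the stochastic integral term by the BDG-type inequality (\ref{e-2-2}), which introduces the factor $C(p)$ and a power of $\bar\sigma$.

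The key intermediate inequality will have the shape
\[
\mathbb{\hat E}\Bigl[\sup_{s\le t}|\hat X_s|^p\Bigr] \le \tilde C(p,T,L_1,\bar\sigma)\int_0^t \mathbb{\hat E}\Bigl[\sup_{r\le s}|\hat X_r|^p\Bigr]\,ds + \tilde C(p,T,L_1,\bar\sigma)\,\mathbb{\hat E}\Bigl[\Bigl(\int_0^T(|\hat b_t|+|\hat h_t|)\,dt\Bigr)^p + \Bigl(\int_0^T|\hat\sigma_t|^2\,dt\Bigr)^{p/2}\Bigr],
\]
after which Gronwall's lemma (valid since $t\mapsto \mathbb{\hat E}[\sup_{r\le t}|\hat X_r|^p]$ is finite — using $X^{(i)}\in S_G^p$ — and nondecreasing) yields (\ref{e-2-4}) with $C_1 = \tilde C\,e^{\tilde C T}$. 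To get the BDG step right for $p\in(1,2)$ I would first bound $\sup_{s\le t}|\int_0^s \sigma(\cdots)\,dB_r|^p$ by $C(p)(\int_0^t|\sigma(r,X_r^{(1)},y_r^{(1)}) - \sigma(r,X_r^{(2)},y_r^{(2)})|^2\,dr)^{p/2}$ via (\ref{e-2-2}), then use $(a+b)^{p/2}\le a^{p/2}+b^{p/2}$ (since $p/2<1$) and $(\int_0^t |\hat X_r|^2\,dr)^{p/2}$... here I must be careful: I cannot pass from $(\int|\hat X|^2)^{p/2}$ to $\int|\hat X|^p$ when $p<2$, so instead I keep it as $\le (\int_0^t \sup_{r\le s}|\hat X_r|^2\,ds)^{p/2} \le T^{(2-p)/2}\int_0^t \sup_{r\le s}|\hat X_r|^p\,ds$ by Hölder, which is fine and just contributes another power of $T$ to the constant.

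The main obstacle is precisely this handling of the $p\in(1,2)$ regime in the stochastic-integral term: one must route through the BDG inequality (\ref{e-2-2}) to get a $(\int|\cdot|^2\,dr)^{p/2}$ expression and then use Hölder in the time variable rather than the (false) pointwise inequality $(\int|\hat Y|^2)^{p/2}\le C\int|\hat Y|^p$ flagged in the introduction. Everything else is bookkeeping: tracking that the constant depends continuously on $p$ (which follows since $C(p)$ from BDG and the Hölder exponents vary continuously, and the Gronwall exponential is continuous in its arguments), and confirming finiteness of the relevant $\mathbb{\hat E}[\sup|\hat X|^p]$ a priori from $X^{(i)}\in S_G^p(0,T;\mathbb{R}^n)$ so that Gronwall applies. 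I would present the argument uniformly in $p\in(1,\beta)$, noting the only place $p<2$ versus $p\ge2$ matters is the choice between $(a+b)^{p/2}\le a^{p/2}+b^{p/2}$ and the convexity inequality in the opposite direction, both of which cost only constants depending on $p$.
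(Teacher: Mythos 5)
Your treatment of the case $p\ge 2$ is fine and coincides with the paper's remark following the theorem (Gronwall with constant $e^{\lambda_3 T}\lambda_4$). The gap is in the step you single out as the "main obstacle," namely the stochastic-integral term for $p\in(1,2)$. The inequality you invoke,
\[
\Bigl(\int_0^t \sup_{r\le s}|\hat X_r|^2\,ds\Bigr)^{p/2}\;\le\;T^{(2-p)/2}\int_0^t \sup_{r\le s}|\hat X_r|^p\,ds ,
\]
is the \emph{reverse} of H\"older's inequality and is false. With exponents $2/p$ and $2/(2-p)$, H\"older gives $\int_0^t F^{p/2}\,ds\le t^{(2-p)/2}\bigl(\int_0^t F\,ds\bigr)^{p/2}$, i.e.\ the opposite direction; and no constant can repair it: taking $F(s)=\sup_{r\le s}|\hat X_r|^2=M\,\mathbf{1}_{[t-\varepsilon,t]}(s)$ (a legitimate nondecreasing profile), the left side is $M^{p/2}\varepsilon^{p/2}$ while the right side is of order $M^{p/2}\varepsilon$, and $\varepsilon^{p/2-1}\to\infty$ as $\varepsilon\to 0$ since $p<2$. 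Inserting the running supremum does not help — this is exactly the same obstruction as the inequality $(\int_0^T|\hat Y_t|^2dt)^{p/2}\le C\int_0^T|\hat Y_t|^p dt$ that the introduction flags as failing for $p\in(1,2)$. Consequently your key intermediate inequality of Gronwall type cannot be reached for $p<2$, and the whole argument collapses precisely where the theorem is hardest.

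The paper's proof avoids this by giving up on the integral (Gronwall) form altogether for general $p$. It bounds $\bigl(\int_{t_0}^{t_0+\delta}|\hat X_s|^2ds\bigr)^{p/2}\le\delta^{p/2}\sup_{s\in[t_0,t_0+\delta]}|\hat X_s|^p$ — pulling the supremum out rather than trying to bring the power $p$ inside the time integral — which puts $\sup|\hat X|^p$ on the right with a coefficient $\lambda_1(\delta)$ that is small only for small $\delta$. One then fixes $\delta_0$ with $\lambda_1(\delta_0)=0.75$, absorbs that term into the left-hand side on each subinterval $[t_0,t_0+\delta_0]$, and iterates across the $O(T/\delta_0)$ subintervals covering $[0,T]$, which produces the constant $C_1$ in (\ref{e-2-5}). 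If you replace your H\"older step by this localization-and-iteration scheme, the rest of your argument (the splitting into state and driving parts, the use of (\ref{e-2-1}) and (\ref{e-2-2}), and the continuity of the constant in $p$) goes through.
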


\begin{proof}
For the convenience of the reader, we sketch the proof. Set $\hat{X}_{t}%
=X_{t}^{(1)}-X_{t}^{(2)}$. For each given $t_{0}\in \lbrack0,T]$ and $\delta
>0$, we have%
\[
\hat{X}_{t}=\hat{X}_{t_{0}}+\int_{t_{0}}^{t}\tilde{b}(s)ds+\int_{t_{0}}%
^{t}\tilde{h}(s)d\langle B\rangle_{s}+\int_{t_{0}}^{t}\tilde{\sigma}%
(s)dB_{s},\text{ }t\in \lbrack t_{0},t_{0}+\delta],
\]
where $|\tilde{b}(s)|=|b(s,X_{s}^{(1)},y_{s}^{(1)})-b(s,X_{s}^{(2)}%
,y_{s}^{(2)})|\leq nL_{1}|\hat{X}_{s}|+|\hat{b}_{s}|$, similarly, $|\tilde
{h}(s)|\leq nL_{1}|\hat{X}_{s}|+|\hat{h}_{s}|$, $|\tilde{\sigma}(s)|\leq
nL_{1}|\hat{X}_{s}|+|\hat{\sigma}_{s}|$. Then we get%
\[
\sup_{t\in \lbrack t_{0},t_{0}+\delta]}\left \vert \hat{X}_{t}\right \vert
^{p}\leq4^{p-1}\left \{  |\hat{X}_{t_{0}}|^{p}+\left(  \int_{t_{0}}%
^{t_{0}+\delta}|\tilde{b}(s)|ds\right)  ^{p}+\bar{\sigma}^{2p}\left(
\int_{t_{0}}^{t_{0}+\delta}|\tilde{h}(s)|ds\right)  ^{p}+\sup_{t\in \lbrack
t_{0},t_{0}+\delta]}\left \vert \int_{t_{0}}^{t}\tilde{\sigma}(s)dB_{s}%
\right \vert ^{p}\right \}  .
\]
By (\ref{e-2-2}), we can deduce%
\[
\mathbb{\hat{E}}\left[  \sup_{t\in \lbrack t_{0},t_{0}+\delta]}\left \vert
\int_{t_{0}}^{t}\tilde{\sigma}(s)dB_{s}\right \vert ^{p}\right]  \leq n^{p}%
\bar{\sigma}^{p}C(p)\mathbb{\hat{E}}\left[  \left(  \int_{t_{0}}^{t_{0}%
+\delta}|\tilde{\sigma}(s)|^{2}ds\right)  ^{p/2}\right]  .
\]
It is easy to verify that%
\begin{align*}
\left(  \int_{t_{0}}^{t_{0}+\delta}|\tilde{b}(s)|ds\right)  ^{p}  &
\leq2^{p-1}\left[  \left(  nL_{1}\int_{t_{0}}^{t_{0}+\delta}|\hat{X}%
_{s}|ds\right)  ^{p}+\left(  \int_{t_{0}}^{t_{0}+\delta}|\hat{b}%
_{s}|ds\right)  ^{p}\right] \\
&  \leq2^{p-1}(nL_{1}\delta)^{p}\sup_{t\in \lbrack t_{0},t_{0}+\delta
]}\left \vert \hat{X}_{t}\right \vert ^{p}+2^{p-1}\left(  \int_{t_{0}}%
^{t_{0}+\delta}|\hat{b}_{s}|ds\right)  ^{p}%
\end{align*}
and%
\begin{align*}
\left(  \int_{t_{0}}^{t_{0}+\delta}|\tilde{\sigma}(s)|^{2}ds\right)  ^{p/2}
&  \leq2^{p/2}\left[  \left(  2n^{2}L_{1}^{2}\int_{t_{0}}^{t_{0}+\delta}%
|\hat{X}_{s}|^{2}ds\right)  ^{p/2}+\left(  2\int_{t_{0}}^{t_{0}+\delta}%
|\hat{\sigma}_{s}|^{2}ds\right)  ^{p/2}\right] \\
&  \leq2^{p}(nL_{1})^{p}\delta^{p/2}\sup_{t\in \lbrack t_{0},t_{0}+\delta
]}\left \vert \hat{X}_{t}\right \vert ^{p}+2^{p}\left(  \int_{t_{0}}%
^{t_{0}+\delta}|\hat{\sigma}_{s}|^{2}ds\right)  ^{p/2}.
\end{align*}
Thus we obtain%
\begin{align*}
\mathbb{\hat{E}}\left[  \sup_{t\in \lbrack t_{0},t_{0}+\delta]}\left \vert
\hat{X}_{t}\right \vert ^{p}\right]   &  \leq4^{p-1}\mathbb{\hat{E}}\left[
|\hat{X}_{t_{0}}|^{p}\right]  +\lambda_{1}(\delta)\mathbb{\hat{E}}\left[
\sup_{t\in \lbrack t_{0},t_{0}+\delta]}\left \vert \hat{X}_{t}\right \vert
^{p}\right] \\
&  \  \  \ +\lambda_{2}\mathbb{\hat{E}}\left[  \left(  \int_{0}^{T}(|\hat{b}%
_{t}|+|\hat{h}_{t}|)dt\right)  ^{p}+\left(  \int_{0}^{T}|\hat{\sigma}_{t}%
|^{2}dt\right)  ^{p/2}\right]  ,
\end{align*}
where%
\[
\lambda_{1}(\delta)=8^{p-1}\left[  (1+\bar{\sigma}^{2p})(nL_{1}\delta
)^{p}+2C(p)(L_{1}n^{2}\bar{\sigma})^{p}\delta^{p/2}\right]  ,\text{ }%
\lambda_{2}=8^{p-1}\left[  1+\bar{\sigma}^{2p}+2C(p)(n\bar{\sigma}%
)^{p}\right]  .
\]
Choosing $\delta_{0}>0$ such that $\lambda_{1}(\delta_{0})=0.75$, then, for
$\delta \leq \delta_{0}\wedge(T-t_{0})$, we get%
\[
\mathbb{\hat{E}}\left[  \sup_{t\in \lbrack t_{0},t_{0}+\delta]}\left \vert
\hat{X}_{t}\right \vert ^{p}\right]  \leq4^{p}\mathbb{\hat{E}}\left[  |\hat
{X}_{t_{0}}|^{p}\right]  +4\lambda_{2}\mathbb{\hat{E}}\left[  \left(  \int
_{0}^{T}(|\hat{b}_{t}|+|\hat{h}_{t}|)dt\right)  ^{p}+\left(  \int_{0}^{T}%
|\hat{\sigma}_{t}|^{2}dt\right)  ^{p/2}\right]  .
\]
Thus we can deduce%
\[
\mathbb{\hat{E}}\left[  \sup_{t\leq T}\left \vert X_{t}^{(1)}-X_{t}%
^{(2)}\right \vert ^{p}\right]  \leq C_{1}(p,T,L_{1},\bar{\sigma}%
)\mathbb{\hat{E}}\left[  \left(  \int_{0}^{T}(|\hat{b}_{t}|+|\hat{h}%
_{t}|)dt\right)  ^{p}+\left(  \int_{0}^{T}|\hat{\sigma}_{t}|^{2}dt\right)
^{p/2}\right]  ,
\]
where%
\begin{equation}
C_{1}(p,T,L_{1},\bar{\sigma})=\frac{4\lambda_{2}}{4^{p}-1}\left(
\frac{4^{p(T+2\delta_{0})/\delta_{0}}-4^{p}}{4^{p}-1}-\frac{T}{\delta_{0}%
}\right)  . \label{e-2-5}%
\end{equation}
It is easy to check that $C_{1}(p,T,L_{1},\bar{\sigma})$ is continuous in $p$.
\end{proof}

\begin{remark}
If $p\geq2$, then
\[
\left(  \int_{t_{0}}^{t_{0}+\delta}|\hat{X}_{s}|^{2}ds\right)  ^{p/2}%
\leq \delta^{(p-2)/2}\int_{t_{0}}^{t_{0}+\delta}|\hat{X}_{s}|^{p}ds\leq
\delta^{(p-2)/2}\int_{t_{0}}^{t_{0}+\delta}\sup_{t\in \lbrack t_{0}%
,s]}\left \vert \hat{X}_{t}\right \vert ^{p}ds.
\]
Taking $t_{0}=0$ and $\delta=T$ in the proof of Theorem \ref{th-2-2} under
$p\geq2$, we obtain%
\begin{align*}
\mathbb{\hat{E}}\left[  \sup_{t\leq T}\left \vert \hat{X}_{t}\right \vert
^{p}\right]   &  \leq \lambda_{3}\int_{0}^{T}\mathbb{\hat{E}}\left[  \sup
_{t\in \leq s}\left \vert \hat{X}_{t}\right \vert ^{p}\right]  ds\\
&  \  \  \ +\lambda_{4}\mathbb{\hat{E}}\left[  \left(  \int_{0}^{T}(|\hat{b}%
_{t}|+|\hat{h}_{t}|)dt\right)  ^{p}+\left(  \int_{0}^{T}|\hat{\sigma}_{t}%
|^{2}dt\right)  ^{p/2}\right]  ,
\end{align*}
where%
\[
\lambda_{3}=6^{p-1}\left[  (1+\bar{\sigma}^{2p})(nL_{1})^{p}T^{p-1}%
+2C(p)(L_{1}n^{2}\bar{\sigma})^{p}T^{(p-2)/2}\right]  ,\text{ }\lambda
_{4}=6^{p-1}\left[  1+\bar{\sigma}^{2p}+2C(p)(n\bar{\sigma})^{p}\right]  .
\]
By the Gronwall inequality, we get%
\begin{equation}
C_{1}(p,T,L_{1},\bar{\sigma})=e^{\lambda_{3}T}\lambda_{4}. \label{e-2-7}%
\end{equation}

\end{remark}

The following theorem is Propositions 3.8 and 5.1 in \cite{HJPS1}.

\begin{theorem}
\label{th-2-3}Suppose assumptions (H1) and (H2) hold. For each $p\in(1,\beta)$
and $(x_{t}^{(i)})_{t\leq T}\in S_{G}^{p}(0,T;\mathbb{R}^{n})$, $i=1$, $2$.
Let $(Y_{t}^{(i)},Z_{t}^{(i)},K_{t}^{(i)})_{t\leq T}$ be the $L^{p}$-solution
of $G$-BSDE%
\[
dY_{t}^{(i)}=f(t,x_{t}^{(i)},Y_{t}^{(i)},Z_{t}^{(i)})dt+g(t,x_{t}^{(i)}%
,Y_{t}^{(i)},Z_{t}^{(i)})d\langle B\rangle_{t}+Z_{t}^{(i)}dB_{t}+dK_{t}%
^{(i)},\text{ }Y_{T}^{(i)}=\phi(x_{T}^{(i)}),
\]
for $i=1$, $2$. Then

\begin{description}
\item[(i)] there exists a deterministic function $C_{2}(p,T,L_{1},\bar{\sigma
},\underline{\sigma})>0$, which is continuous in $p$, such that%
\[
\left \vert \hat{Y}_{t}\right \vert ^{p}\leq C_{2}(p,T,L_{1},\bar{\sigma
},\underline{\sigma})\mathbb{\hat{E}}_{t}\left[  \left(  |\hat{\phi}_{T}%
|+\int_{t}^{T}(|\hat{f}_{s}|+|\hat{g}_{s}|)ds\right)  ^{p}\right]  ,
\]
where $\hat{Y}_{t}=Y_{t}^{(1)}-Y_{t}^{(2)}$, $\hat{\phi}_{T}=\phi(x_{T}%
^{(1)})-\phi(x_{T}^{(2)})$,%
\[
\hat{f}_{s}=f(s,x_{s}^{(1)},Y_{s}^{(2)},Z_{s}^{(2)})-f(s,x_{s}^{(2)}%
,Y_{s}^{(2)},Z_{s}^{(2)})\text{, }\hat{g}_{s}=g(s,x_{s}^{(1)},Y_{s}%
^{(2)},Z_{s}^{(2)})-g(s,x_{s}^{(2)},Y_{s}^{(2)},Z_{s}^{(2)}).
\]

\item[(ii)] there exists a deterministic function $C_{3}(p,T,L_{1},\bar
{\sigma},\underline{\sigma})>0$ such that%
\[
\mathbb{\hat{E}}\left[  \left(  \int_{0}^{T}|\hat{Z}_{t}|^{2}dt\right)
^{p/2}\right]  \leq C_{3}(p,T,L_{1},\bar{\sigma},\underline{\sigma})\left \{
\mathbb{\hat{E}}\left[  \sup_{t\leq T}\left \vert \hat{Y}_{t}\right \vert
^{p}\right]  +(\Lambda_{1}+\Lambda_{2})^{1/2}\left(  \mathbb{\hat{E}}\left[
\sup_{t\leq T}\left \vert \hat{Y}_{t}\right \vert ^{p}\right]  \right)
^{1/2}\right \}  ,
\]
where $\hat{Z}_{t}=Z_{t}^{(1)}-Z_{t}^{(2)}$,%
\[
\Lambda_{i}=\mathbb{\hat{E}}\left[  \sup_{t\leq T}|Y_{t}^{(i)}|^{p}\right]
+\mathbb{\hat{E}}\left[  \left(  \int_{0}^{T}(|f(s,x_{s}^{(i)}%
,0,0)|+|g(s,x_{s}^{(i)},0,0)|)ds\right)  ^{p}\right]  \text{ for }i=1,2.
\]

\end{description}
\end{theorem}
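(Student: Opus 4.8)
This is exactly Propositions 3.8 and 5.1 of \cite{HJPS1}, so the plan is to reproduce the structure of those arguments in the present notation. Write $\hat{Y}=Y^{(1)}-Y^{(2)}$, $\hat{Z}=Z^{(1)}-Z^{(2)}$, $\hat{K}=K^{(1)}-K^{(2)}$, $\hat{\phi}_{T}=\phi(x_{T}^{(1)})-\phi(x_{T}^{(2)})$. By the Lipschitz assumption (H2) there are bounded adapted processes $a,b,c,d$ with $|a_{t}|,|b_{t}|,|c_{t}|,|d_{t}|\leq L_{1}$ q.s. such that
\[
f(t,x_{t}^{(1)},Y_{t}^{(1)},Z_{t}^{(1)})-f(t,x_{t}^{(2)},Y_{t}^{(2)},Z_{t}^{(2)})=\hat{f}_{t}+a_{t}\hat{Y}_{t}+b_{t}\hat{Z}_{t}
\]
and similarly for $g$, with $\hat{f}_{t},\hat{g}_{t}$ the frozen terms of the statement. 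Hence $(\hat{Y},\hat{Z},\hat{K})$ solves the linear $G$-BSDE with terminal value $\hat{\phi}_{T}$, generators $\hat{f}_{t}+a_{t}\hat{Y}_{t}+b_{t}\hat{Z}_{t}$, $\hat{g}_{t}+c_{t}\hat{Y}_{t}+d_{t}\hat{Z}_{t}$, and with $\hat{K}=K^{(1)}-K^{(2)}$ a difference of non-increasing $G$-martingales; from here everything is a priori estimates for this linear equation.

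\noindent\textbf{Part (i).} First I would apply the $G$-It\^{o} formula to $(|\hat{Y}_{t}|^{2}+\varepsilon)^{p/2}$ --- the regularization $\varepsilon>0$ being needed because $y\mapsto|y|^{p}$ is not $C^{2}$ for $p\in(1,2)$, with $\varepsilon\downarrow0$ taken only at the end. In the resulting identity the $dB_{t}$ term is a $G$-martingale and disappears under $\mathbb{\hat{E}}_{t}$; the second-order term is nonnegative and, together with its companion $|\hat{Z}_{t}|^{2}d\langle B\rangle_{t}$ term, absorbs the $\hat{Z}$-linear contributions via Young's inequality and (\ref{e-2-1}); the $\hat{Y}$-linear contributions $a_{t}\hat{Y}_{t}$, $c_{t}\hat{Y}_{t}\,d\langle B\rangle_{t}$ (bounded in modulus by a constant times $|\hat{Y}_{t}|\,dt$ through $d\langle B\rangle_{t}\leq\bar{\sigma}^{2}dt$) are removed by an exponential substitution $\hat{Y}_{t}\mapsto e^{\mu t}\hat{Y}_{t}$ or a backward Gronwall inequality; and the sources $\hat{f}_{t},\hat{g}_{t}$ feed into the $(\int_{t}^{T}(|\hat{f}_{s}|+|\hat{g}_{s}|)ds)^{p}$ term after Young's inequality. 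The one delicate term is $-p\int_{t}^{T}(|\hat{Y}_{s}|^{2}+\varepsilon)^{(p-2)/2}\hat{Y}_{s}\,d\hat{K}_{s}$: splitting $\hat{K}=K^{(1)}-K^{(2)}$ and using that each $K^{(i)}$ is a non-increasing $G$-martingale with $K_{0}^{(i)}=0$, after conditioning this is dominated by a constant times $\mathbb{\hat{E}}_{t}[\sup_{s\geq t}(|\hat{Y}_{s}|^{2}+\varepsilon)^{p/2}\cdot\text{(small factor)}]$ and is reabsorbed in the usual way. Collecting and letting $\varepsilon\downarrow0$ gives (i) with a constant $C_{2}(p,T,L_{1},\bar{\sigma},\underline{\sigma})$, continuous in $p$ since $p$ enters only through the Young/Gronwall constants and $C(p)$ in (\ref{e-2-2}).

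\noindent\textbf{Part (ii).} Here I would apply the $G$-It\^{o} formula to $|\hat{Y}_{t}|^{2}$ on $[0,T]$ (the quadratic-variation contributions of $\hat{K}$ handled as in \cite{HJPS1}), giving, via $d\langle B\rangle_{t}\geq\underline{\sigma}^{2}dt$,
\[
\underline{\sigma}^{2}\int_{0}^{T}|\hat{Z}_{t}|^{2}dt\leq|\hat{\phi}_{T}|^{2}+2\int_{0}^{T}\hat{Y}_{t}(\hat{f}_{t}+a_{t}\hat{Y}_{t}+b_{t}\hat{Z}_{t})dt+2\int_{0}^{T}\hat{Y}_{t}(\hat{g}_{t}+c_{t}\hat{Y}_{t}+d_{t}\hat{Z}_{t})d\langle B\rangle_{t}-2\int_{0}^{T}\hat{Y}_{t}\hat{Z}_{t}dB_{t}-2\int_{0}^{T}\hat{Y}_{t}d\hat{K}_{t}.
\]
Using $|\hat{\phi}_{T}|=|\hat{Y}_{T}|\leq\sup_{t\leq T}|\hat{Y}_{t}|$, moving the $\hat{Z}$-linear terms to the left by Young's inequality, bounding $|\int_{0}^{T}\hat{Y}_{t}d\hat{K}_{t}|\leq\sup_{t\leq T}|\hat{Y}_{t}|(|K_{T}^{(1)}|+|K_{T}^{(2)}|)$, then taking $\mathbb{\hat{E}}$, applying the Burkholder--Davis--Gundy estimate (\ref{e-2-2}) to $\int_{0}^{\cdot}\hat{Y}_{t}\hat{Z}_{t}dB_{t}$ and reabsorbing the resulting $\mathbb{\hat{E}}[(\int_{0}^{T}|\hat{Z}_{t}|^{2}dt)^{p/2}]$ by one more Young inequality, one is left --- after raising to the power $p/2$ and using subadditivity --- with $\mathbb{\hat{E}}[\sup_{t\leq T}|\hat{Y}_{t}|^{p}]$ and with $\mathbb{\hat{E}}[\sup_{t\leq T}|\hat{Y}_{t}|^{p/2}(\int_{0}^{T}(|\hat{f}_{t}|+|\hat{g}_{t}|)dt+|K_{T}^{(1)}|+|K_{T}^{(2)}|)^{p/2}]$. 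Cauchy--Schwarz on the second expectation, together with the standard $G$-BSDE a priori bounds of \cite{HJPS1} --- which control $\mathbb{\hat{E}}[\sup_{t\leq T}|Y_{t}^{(i)}|^{p}]$, $\mathbb{\hat{E}}[|K_{T}^{(i)}|^{p}]$ and $\mathbb{\hat{E}}[(\int_{0}^{T}|Z_{t}^{(i)}|^{2}dt)^{p/2}]$ by a constant times $\Lambda_{i}$, hence also $\mathbb{\hat{E}}[(\int_{0}^{T}(|\hat{f}_{t}|+|\hat{g}_{t}|)dt)^{p}]$ by $C(\Lambda_{1}+\Lambda_{2})$ via (H2) --- yields exactly the announced bound, with the cross term $(\Lambda_{1}+\Lambda_{2})^{1/2}(\mathbb{\hat{E}}[\sup_{t\leq T}|\hat{Y}_{t}|^{p}])^{1/2}$.

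\noindent\textbf{Main obstacle.} The genuinely $G$-specific difficulty, present in both parts, is the term $\int\hat{Y}\,d\hat{K}$: since $\hat{K}$ is a \emph{difference} of two decreasing $G$-martingales it carries no sign and cannot simply be conditioned away in the sublinear setting; it must be split into $K^{(1)},K^{(2)}$, controlled through $\mathbb{\hat{E}}_{t}[K_{t}^{(i)}-K_{T}^{(i)}]\leq0$ and the $L^{p}$-estimates on $K_{T}^{(i)}$, and the ensuing cross term carefully tracked --- which is precisely what forces the non-standard right-hand side of (ii). The only other point to watch is the $\varepsilon$-regularization and limit making the $G$-It\^{o} formula legitimate for $p\in(1,2)$, routine but unavoidable.
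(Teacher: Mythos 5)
First, a structural point: the paper does not prove this theorem at all. It is imported verbatim from Propositions 3.8 and 5.1 of \cite{HJPS1} (the surrounding remark only extracts the explicit constant (\ref{e-2-6}) from that proof), so your plan to ``reproduce the structure of those arguments'' is, in spirit, the same route as the paper's. Your sketch of part (ii) would indeed close along the lines you describe: It\^{o}'s formula for $|\hat{Y}|^{2}$, Young's inequality and (\ref{e-2-1}) to pass to $\underline{\sigma}^{2}\int_{0}^{T}|\hat{Z}_{t}|^{2}dt$, the BDG estimate (\ref{e-2-2}), and Cauchy--Schwarz against $|K_{T}^{(1)}|+|K_{T}^{(2)}|+\int_{0}^{T}(|\hat{f}_{t}|+|\hat{g}_{t}|)dt$, with the single-equation a priori bounds of \cite{HJPS1} converting $\mathbb{\hat{E}}[|K_{T}^{(i)}|^{p}]$ and $\mathbb{\hat{E}}[(\int_{0}^{T}|Z_{t}^{(i)}|^{2}dt)^{p/2}]$ into $C\Lambda_{i}$; that is exactly where the cross term $(\Lambda_{1}+\Lambda_{2})^{1/2}(\mathbb{\hat{E}}[\sup_{t\leq T}|\hat{Y}_{t}|^{p}])^{1/2}$ comes from.

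Part (i), however, has a genuine gap at precisely the step you yourself flag as the main obstacle. The asserted estimate is a pointwise conditional bound whose right-hand side contains no $K$, no $\Lambda$-type quantities and no $\sup_{s}|\hat{Y}_{s}|$; hence the contribution of $-\int_{t}^{T}p(|\hat{Y}_{s}|^{2}+\varepsilon)^{(p-2)/2}\hat{Y}_{s}\,d\hat{K}_{s}$ must be eliminated exactly, not ``dominated by a constant times $\mathbb{\hat{E}}_{t}[\sup_{s\geq t}(\cdot)\cdot(\text{small factor})]$ and reabsorbed'': there is no small parameter available, a term $\mathbb{\hat{E}}_{t}[\sup_{s\geq t}|\hat{Y}_{s}|^{p}]$ cannot be absorbed into the pointwise quantity $|\hat{Y}_{t}|^{p}$ on the left, and bounding the $K$-contribution by $L^{p}$-norms of $K_{T}^{(i)}$ would only produce an estimate with $\Lambda_{i}$ on the right, i.e.\ something of the shape of (ii), not (i). Moreover the sign relation you invoke, $\mathbb{\hat{E}}_{t}[K_{t}^{(i)}-K_{T}^{(i)}]\leq0$, is backwards: $K^{(i)}$ is non-increasing, so $K_{t}^{(i)}-K_{T}^{(i)}\geq0$, and what the $G$-martingale property yields is $\mathbb{\hat{E}}_{t}[K_{T}^{(i)}-K_{t}^{(i)}]=0$; since $\mathbb{\hat{E}}_{t}$ is only sublinear, this identity does not allow you to discard the two ``bad'' pieces $-\int(\cdot)\hat{Y}_{s}^{+}dK_{s}^{(1)}$ and $-\int(\cdot)\hat{Y}_{s}^{-}dK_{s}^{(2)}$ term by term. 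Handling exactly these terms is where \cite{HJPS1} invest nontrivial work on the structure of non-increasing $G$-martingales; as written, your argument for (i) does not close without that additional input.
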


\begin{remark}
According to the proof of Proposition 5.1 in \cite{HJPS1}, we can deduce%
\begin{equation}
C_{2}(p,T,L_{1},\bar{\sigma},\underline{\sigma})=2^{p-1}\left[  1+(1+\bar
{\sigma}^{2})^{p}e^{pL_{1}(1+\bar{\sigma}^{2})T}\right]  e^{\lambda_{5}T},
\label{e-2-6}%
\end{equation}
where%
\[
\lambda_{5}=pL_{1}(1+\bar{\sigma}^{2})+\frac{1}{2}pL_{1}^{2}\bar{\sigma}%
^{2}(1+\underline{\sigma}^{-2})^{2}[(p-1)^{-1}\vee1].
\]

\end{remark}

\section{Existence and uniqueness of $L^{p}$-solution for $G$-FBSDEs}

For simplicity, we use $C_{1}(p)$ and $C_{2}(p)$ instead of $C_{1}%
(p,T,L_{1},\bar{\sigma})$ and $C_{2}(p,T,L_{1},\bar{\sigma},\underline{\sigma
})$ respectively in the following. The first main result in this section is
the existence and uniqueness of $L^{p}$-solution for $G$-FBSDE (\ref{e-2-3})
with $p\geq2$.

\begin{theorem}
\label{th-3-1}Suppose assumptions (H1) and (H2) hold. If $\beta>2$ and
\begin{equation}
\Lambda_{p}:=C_{1}(p)C_{2}(p)(nL_{2}L_{3})^{p}(T^{p}+T^{p/2})(1+T)^{p}<1
\label{e-3-1}%
\end{equation}
for some $p\in \lbrack2,\beta)$, then $G$-FBSDE (\ref{e-2-3}) has a unique
$L^{p}$-solution $(X,Y,Z,K)$.
\end{theorem}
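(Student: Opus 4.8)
The plan is to use a fixed-point argument on the map $\Gamma$ that sends a process $y\in S_G^p(0,T)$ to the $Y$-component of the $G$-BSDE obtained after first solving the forward $G$-SDE with input $y$. Concretely, given $y\in S_G^p(0,T)$, assumptions (H1), (H2) and the classical theory of $G$-SDEs (cf. the setup of Theorem \ref{th-2-2}) produce a unique $X^y\in S_G^p(0,T;\mathbb{R}^n)$ solving the forward equation in \eqref{e-2-3} with $Y$ replaced by $y$; then the $G$-BSDE theory of \cite{HJPS1} (cf. Theorem \ref{th-2-3}) gives a unique $L^p$-solution $(Y^y,Z^y,K^y)$ of the backward equation in \eqref{e-2-3} with $X$ replaced by $X^y$ and terminal condition $\phi(X^y_T)$. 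Set $\Gamma(y):=Y^y$. A fixed point $y=\Gamma(y)$, together with the associated $X^y$, $Z^y$, $K^y$, is exactly an $L^p$-solution of \eqref{e-2-3}, and conversely; so it suffices to show $\Gamma$ is a contraction on $S_G^p(0,T)$ under \eqref{e-3-1}.

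Next I would estimate $\|\Gamma(y^{(1)})-\Gamma(y^{(2)})\|_{S_G^p}$ for $y^{(1)},y^{(2)}\in S_G^p(0,T)$. Write $\hat X=X^{(1)}-X^{(2)}$, $\hat Y=\Gamma(y^{(1)})-\Gamma(y^{(2)})$, $\hat y=y^{(1)}-y^{(2)}$. First apply Theorem \ref{th-2-2}: since the only discrepancy in the forward coefficients comes from the $y$-argument, the terms $\hat b_t,\hat h_t,\hat\sigma_t$ there are controlled by $L_2|\hat y_t|$ (using the Lipschitz bound in (H2) with constant $L_2$ in $y$), so
\[
\mathbb{\hat E}\Big[\sup_{t\leq T}|\hat X_t|^p\Big]\leq C_1(p)(nL_2)^p\,\mathbb{\hat E}\Big[\Big(\int_0^T|\hat y_t|\,dt\Big)^p+\Big(\int_0^T|\hat y_t|^2\,dt\Big)^{p/2}\Big].
\]
Since $p\geq 2$, I would bound $\big(\int_0^T|\hat y_t|\,dt\big)^p\leq T^{p}\sup_{t\leq T}|\hat y_t|^p$ and $\big(\int_0^T|\hat y_t|^2\,dt\big)^{p/2}\leq T^{p/2}\sup_{t\leq T}|\hat y_t|^p$ (Hölder / the remark after Theorem \ref{th-2-2} — this is exactly where $p\geq 2$ is used and where the $p\in(1,2)$ case would fail), giving $\mathbb{\hat E}[\sup_{t\leq T}|\hat X_t|^p]\leq C_1(p)(nL_2)^p(T^p+T^{p/2})\,\mathbb{\hat E}[\sup_{t\leq T}|\hat y_t|^p]$. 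Then apply Theorem \ref{th-2-3}(i): the backward coefficients differ only through the $x$-argument, controlled by $L_3|\hat X_t|$ and $|\hat\phi_T|\leq L_3|\hat X_T|$, so
\[
|\hat Y_t|^p\leq C_2(p)\,\mathbb{\hat E}_t\Big[\Big(L_3|\hat X_T|+L_3\int_0^T|\hat X_s|\,ds\Big)^p\Big]\leq C_2(p)L_3^p(1+T)^p\,\mathbb{\hat E}_t\Big[\sup_{t\leq T}|\hat X_t|^p\Big].
\]
Taking $\sup_t$, using the subadditivity and tower property of $\mathbb{\hat E}$ (so that $\mathbb{\hat E}[\sup_t\mathbb{\hat E}_t[\,\cdot\,]]\leq\mathbb{\hat E}[\,\cdot\,]$ after dominating the conditional expectations by the terminal random variable via Doob's maximal inequality for $G$-martingales, or simply bounding $\sup_t\mathbb{\hat E}_t[\eta]$ pointwise), one gets $\mathbb{\hat E}[\sup_{t\leq T}|\hat Y_t|^p]\leq C_2(p)L_3^p(1+T)^p\,\mathbb{\hat E}[\sup_{t\leq T}|\hat X_t|^p]$. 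Composing the two estimates yields $\|\hat Y\|_{S_G^p}^p\leq \Lambda_p\,\|\hat y\|_{S_G^p}^p$ with $\Lambda_p$ exactly as in \eqref{e-3-1}, so $\Gamma$ is a contraction when $\Lambda_p<1$.

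Finally, since $(S_G^p(0,T),\|\cdot\|_{S_G^p})$ is a Banach space (completion of $S^0(0,T)$) and $\Gamma$ maps it into itself — here one needs $\beta>2$ and (H1) so that the data are in the right spaces and the $G$-BSDE solution lies in $S_G^p$ with $K_T\in L_G^p$, cf. \cite{HJPS1} — the Banach fixed point theorem gives a unique $y^*\in S_G^p(0,T)$ with $\Gamma(y^*)=y^*$. Setting $X:=X^{y^*}$, $(Y,Z,K):=(Y^{y^*},Z^{y^*},K^{y^*})$ produces an $L^p$-solution of \eqref{e-2-3}; the membership $Z\in M_G^{2,p}(0,T)$ follows from Theorem \ref{th-2-3}(ii) (with $y$-input now fixed, so the $\Lambda_i$ there are finite by (H1)). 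Uniqueness of the whole quadruple follows because any $L^p$-solution $(X,Y,Z,K)$ has $Y$ a fixed point of $\Gamma$, hence $Y=y^*$, hence $X$ is determined by uniqueness for the $G$-SDE and $(Z,K)$ by uniqueness for the $G$-BSDE. I expect the main obstacle to be the bookkeeping in passing the conditional-expectation estimate of Theorem \ref{th-2-3}(i) to the $S_G^p$-norm — i.e. justifying $\mathbb{\hat E}[\sup_{t\leq T}\mathbb{\hat E}_t[\xi]]\leq C\,\mathbb{\hat E}[\xi]$ cleanly in the sublinear setting — and making sure all constants $C_1(p),C_2(p)$ are the ones appearing in \eqref{e-3-1}; the reliance on $p\geq 2$ to absorb $\int_0^T|\hat y_t|^2dt$ into $\sup_t|\hat y_t|^p$ is the structural point that the theorem isolates.
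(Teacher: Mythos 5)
Your overall architecture (Picard/fixed-point on the composed map forward--backward, contraction constant $\Lambda_p$, $\beta>2$ to place the data in the right spaces) matches the paper, but there is a genuine gap at the step where you pass from the conditional estimate of Theorem \ref{th-2-3}(i) to the $S_G^p$-norm. You claim $\mathbb{\hat E}\bigl[\sup_{t\le T}\mathbb{\hat E}_t[\xi]\bigr]\le C\,\mathbb{\hat E}[\xi]$ ``via Doob's maximal inequality for $G$-martingales, or simply bounding $\sup_t\mathbb{\hat E}_t[\eta]$ pointwise.'' Neither works: there is no pointwise bound of $\mathbb{\hat E}_t[\eta]$ by anything controlled by $\mathbb{\hat E}[\eta]$, and the $G$-Doob inequality (the paper's (\ref{e-3-9}), from \cite{STZ, Song11}) has the form $\mathbb{\hat E}[\sup_t\mathbb{\hat E}_t[\xi]]\le \frac{q}{q-1}(\mathbb{\hat E}[\xi^{q}])^{1/q}$ and \emph{requires a strictly higher moment} $q>1$; an $L^p\to L^p$ maximal inequality with the same exponent is false (it already fails for linear expectations at $q=1$, and in the sublinear setting there is no substitute). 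So your claimed contraction $\|\hat Y\|_{S_G^p}^p\le \Lambda_p\|\hat y\|_{S_G^p}^p$ is not established. A sanity check that something must be wrong: if your step were valid, the bound $\bigl(\int_0^T|\hat Y_t|^2dt\bigr)^{p/2}\le T^{p/2}\sup_t|\hat Y_t|^p$ would close the argument for every $p>1$, making the paper's separate Theorem \ref{th-3-2} (which needs $\sigma$ independent of $y$ precisely because this absorption fails for $p<2$) superfluous.

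The paper circumvents this in two ways that you should adopt. First, the contraction is measured on the $X$-component, $\mathbb{\hat E}[\sup_t|\hat X_t^{(m)}|^{p'}]$, and the backward step only feeds in $\int_0^T\mathbb{\hat E}[|\hat Y_t^{(m)}|^{p'}]\,dt$: for each \emph{fixed} $t$ one takes the plain $G$-expectation of $|\hat Y_t|^{p'}\le C_2\,\mathbb{\hat E}_t[(\cdot)^{p'}]$ and uses only the tower property $\mathbb{\hat E}[\mathbb{\hat E}_t[\cdot]]=\mathbb{\hat E}[\cdot]$, which is legitimate; this is also exactly where $p\ge 2$ enters, via $\bigl(\int_0^T|\hat Y_t|^2dt\bigr)^{p/2}\le T^{(p-2)/2}\int_0^T|\hat Y_t|^pdt$. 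Second, to recover convergence of $Y^{(m)}$ in $S_G^p$ at the end, the paper exploits the continuity of $\Lambda_p$ in $p$ and $\beta>p$ to pick $p'\in(p,\beta)$ with $\Lambda_{p'}<1$, runs the contraction in $L^{p'}$, and only then applies the $G$-Doob inequality with the genuine gain $p'/p>1$. Your proof can be repaired along these lines (e.g.\ by replacing your $S_G^p$-contraction norm on $y$ with $\int_0^T\mathbb{\hat E}[|\cdot_t|^p]dt$ and adding the $p'>p$ upgrade), but as written the key inequality does not hold.
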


\begin{proof}
We first prove the uniqueness. Let $(X,Y,Z,K)$ and $(X^{\prime},Y^{\prime
},Z^{\prime},K^{\prime})$ be two $L^{p}$-solutions of $G$-FBSDE (\ref{e-2-3}).
Set
\[
\hat{X}_{t}=X_{t}-X_{t}^{\prime},\text{ }\hat{Y}_{t}=Y_{t}-Y_{t}^{\prime
}\text{, }\hat{Z}_{t}=Z_{t}-Z_{t}^{\prime}\text{ for }t\in \lbrack0,T].
\]
By Theorem \ref{th-2-2}, we obtain
\begin{equation}
\mathbb{\hat{E}}\left[  \sup_{t\leq T}\left \vert \hat{X}_{t}\right \vert
^{p}\right]  \leq C_{1}(p)\mathbb{\hat{E}}\left[  \left(  \int_{0}^{T}%
(|\hat{b}_{t}|+|\hat{h}_{t}|)dt\right)  ^{p}+\left(  \int_{0}^{T}|\hat{\sigma
}_{t}|^{2}dt\right)  ^{p/2}\right]  , \label{e-3-1-1}%
\end{equation}
where $\hat{b}_{t}=b(t,X_{t}^{\prime},Y_{t})-b(t,X_{t}^{\prime},Y_{t}^{\prime
})$, $\hat{h}_{t}=h(t,X_{t}^{\prime},Y_{t})-h(t,X_{t}^{\prime},Y_{t}^{\prime
})$, $\hat{\sigma}_{t}=\sigma(t,X_{t}^{\prime},Y_{t})-\sigma(t,X_{t}^{\prime
},Y_{t}^{\prime})$. It follows from (H2) that%
\[
|\hat{b}_{t}|+|\hat{h}_{t}|+|\hat{\sigma}_{t}|\leq nL_{2}|\hat{Y}_{t}|.
\]
Thus we get%
\begin{equation}
\mathbb{\hat{E}}\left[  \sup_{t\leq T}\left \vert \hat{X}_{t}\right \vert
^{p}\right]  \leq C_{1}(p)(nL_{2})^{p}(T^{p-1}+T^{(p-2)/2})\int_{0}%
^{T}\mathbb{\hat{E}}[|\hat{Y}_{t}|^{p}]dt. \label{e-3-2}%
\end{equation}
By (i) of Theorem \ref{th-2-3}, we obtain%
\[
\left \vert \hat{Y}_{t}\right \vert ^{p}\leq C_{2}(p)\mathbb{\hat{E}}_{t}\left[
\left(  |\hat{\phi}_{T}|+\int_{t}^{T}(|\hat{f}_{s}|+|\hat{g}_{s}|)ds\right)
^{p}\right]  ,
\]
where $\hat{\phi}_{T}=\phi(X_{T})-\phi(X_{T}^{\prime})$,%
\[
\hat{f}_{s}=f(s,X_{s},Y_{s}^{\prime},Z_{s}^{\prime})-f(s,X_{s}^{\prime}%
,Y_{s}^{\prime},Z_{s}^{\prime})\text{, }\hat{g}_{s}=g(s,X_{s},Y_{s}^{\prime
},Z_{s}^{\prime})-g(s,X_{s}^{\prime},Y_{s}^{\prime},Z_{s}^{\prime}).
\]
From (H2), we have%
\[
|\hat{\phi}_{T}|\leq L_{3}|\hat{X}_{T}|\text{, }|\hat{f}_{s}|+|\hat{g}%
_{s}|\leq L_{3}|\hat{X}_{s}|.
\]
Then we deduce%
\begin{equation}
\mathbb{\hat{E}}[|\hat{Y}_{t}|^{p}]\leq C_{2}(p)L_{3}^{p}(1+T)^{p}%
\mathbb{\hat{E}}\left[  \sup_{s\leq T}\left \vert \hat{X}_{s}\right \vert
^{p}\right]  . \label{e-3-3}%
\end{equation}
It follows from (\ref{e-3-1}), (\ref{e-3-2}) and (\ref{e-3-3}) that%
\[
\mathbb{\hat{E}}\left[  \sup_{t\leq T}\left \vert \hat{X}_{t}\right \vert
^{p}\right]  \leq \Lambda_{p}\mathbb{\hat{E}}\left[  \sup_{t\leq T}\left \vert
\hat{X}_{t}\right \vert ^{p}\right]  ,
\]
which implies $\mathbb{\hat{E}}\left[  \sup_{t\leq T}\left \vert \hat{X}%
_{t}\right \vert ^{p}\right]  =0$. Then, by (\ref{e-3-3}), we obtain $\hat
{Y}_{t}=0$ q.s. Since $\hat{Y}_{t}$ is continuous in $t$, we can deduce%
\[
\sup_{t\leq T}\left \vert \hat{Y}_{t}\right \vert ^{p}=0\text{ q.s.,}%
\]
which implies $\mathbb{\hat{E}}\left[  \sup_{t\leq T}\left \vert \hat{Y}%
_{t}\right \vert ^{p}\right]  =0$. From (ii) of Theorem \ref{th-2-3}, we get%
\[
\mathbb{\hat{E}}\left[  \left(  \int_{0}^{T}|\hat{Z}_{t}|^{2}dt\right)
^{p/2}\right]  =0,
\]
which implies $K=K^{\prime}$ by $G$-FBSDE (\ref{e-2-3}). Thus the $L^{p}%
$-solution of $G$-FBSDE (\ref{e-2-3}) is unique.

Now we prove the existence. Set $X_{t}^{(0)}=x_{0}$ for $t\leq T$. Define
$(X^{(m)},Y^{(m)},Z^{(m)},K^{(m)})$, $m\geq1$, as follows:%
\begin{equation}
\left \{
\begin{array}
[c]{rl}%
dX_{t}^{(m)}= & b(t,X_{t}^{(m)},Y_{t}^{(m)})dt+h(t,X_{t}^{(m)},Y_{t}%
^{(m)})d\langle B\rangle_{t}+\sigma(t,X_{t}^{(m)},Y_{t}^{(m)})dB_{t},\\
dY_{t}^{(m)}= & f(t,X_{t}^{(m-1)},Y_{t}^{(m)},Z_{t}^{(m)})dt+g(t,X_{t}%
^{(m-1)},Y_{t}^{(m)},Z_{t}^{(m)})d\langle B\rangle_{t}+Z_{t}^{(m)}%
dB_{t}+dK_{t}^{(m)},\\
X_{0}^{(m)}= & x_{0}\in \mathbb{R}^{n},\text{ }Y_{T}^{(m)}=\phi(X_{T}^{(m-1)}).
\end{array}
\right.  \label{e-3-4}%
\end{equation}
For $m=1$, we first slove $G$-BSDE in (\ref{e-3-4}) to get $(Y^{(1)}%
,Z^{(1)},K^{(1)})$. Since $X^{(0)}\in S_{G}^{\alpha}(0,T;\mathbb{R}^{n})$ for
each $\alpha<\beta$, we obtain%
\[
Y^{(1)}\in S_{G}^{\alpha}(0,T)\text{, }Z^{(1)}\in M_{G}^{2,\alpha}(0,T)\text{,
}K_{T}^{(1)}\in L_{G}^{\alpha}(\Omega_{T}),
\]
for each $\alpha<\beta$ by Theorem 4.1 in \cite{HJPS1}. We then slove $G$-SDE
in (\ref{e-3-4}) to get $X^{(1)}$. Obviously, $X^{(1)}\in S_{G}^{\alpha
}(0,T;\mathbb{R}^{n})$ for each $\alpha<\beta$ by Theorem \ref{th-2-2}.
Continuing this process, we can get%
\[
X^{(m)}\in S_{G}^{\alpha}(0,T;\mathbb{R}^{n})\text{, }Y^{(m)}\in S_{G}%
^{\alpha}(0,T)\text{, }Z^{(m)}\in M_{G}^{2,\alpha}(0,T)\text{, }K_{T}^{(m)}\in
L_{G}^{\alpha}(\Omega_{T}),
\]
for each $\alpha<\beta$ and $m\geq1$. Since $\Lambda_{p}$ is continuous in $p$
and $\Lambda_{p}<1$, there exists a $p^{\prime}\in(p,\beta)$ such that
$\Lambda_{p^{\prime}}<1$. Set%
\[
\hat{X}^{(m)}=X^{(m)}-X^{(m-1)}\text{ for }m\geq1\text{, }\hat{Y}%
^{(m)}=Y^{(m)}-Y^{(m-1)}\text{ and }\hat{Z}^{(m)}=Z^{(m)}-Z^{(m-1)}\text{ for
}m\geq2.
\]
By Theorem \ref{th-2-2}, we get, for $m\geq2$,%
\[
\mathbb{\hat{E}}\left[  \sup_{t\leq T}\left \vert \hat{X}_{t}^{(m)}\right \vert
^{p^{\prime}}\right]  \leq C_{1}(p^{\prime})\mathbb{\hat{E}}\left[  \left(
\int_{0}^{T}(|\hat{b}_{t}^{(m)}|+|\hat{h}_{t}^{(m)}|)dt\right)  ^{p^{\prime}%
}+\left(  \int_{0}^{T}|\hat{\sigma}_{t}^{(m)}|^{2}dt\right)  ^{p^{\prime}%
/2}\right]  ,
\]
where $\hat{b}_{t}^{(m)}=b(t,X_{t}^{(m-1)},Y_{t}^{(m)})-b(t,X_{t}%
^{(m-1)},Y_{t}^{(m-1)})$, $\hat{h}_{t}^{(m)}=h(t,X_{t}^{(m-1)},Y_{t}%
^{(m)})-h(t,X_{t}^{(m-1)},Y_{t}^{(m-1)})$, $\hat{\sigma}_{t}^{(m)}%
=\sigma(t,X_{t}^{(m-1)},Y_{t}^{(m)})-\sigma(t,X_{t}^{(m-1)},Y_{t}^{(m-1)})$.
Similar to the proof of (\ref{e-3-2}), we obtain%
\begin{equation}
\mathbb{\hat{E}}\left[  \sup_{t\leq T}\left \vert \hat{X}_{t}^{(m)}\right \vert
^{p^{\prime}}\right]  \leq C_{1}(p^{\prime})(nL_{2})^{p^{\prime}}%
(T^{p^{\prime}-1}+T^{(p^{\prime}-2)/2})\int_{0}^{T}\mathbb{\hat{E}}[|\hat
{Y}_{t}^{(m)}|^{p^{\prime}}]dt. \label{e-3-5}%
\end{equation}
It follows from (i) of Theorem \ref{th-2-3} that, for $m\geq2$,
\[
\left \vert \hat{Y}_{t}^{(m)}\right \vert ^{p^{\prime}}\leq C_{2}(p^{\prime
})\mathbb{\hat{E}}_{t}\left[  \left(  |\hat{\phi}_{T}^{(m)}|+\int_{t}%
^{T}(|\hat{f}_{s}^{(m)}|+|\hat{g}_{s}^{(m)}|)ds\right)  ^{p^{\prime}}\right]
,
\]
where $\hat{\phi}_{T}^{(m)}=\phi(X_{T}^{(m-1)})-\phi(X_{T}^{(m-2)})$,%
\[%
\begin{array}
[c]{l}%
\hat{f}_{s}^{(m)}=f(s,X_{s}^{(m-1)},Y_{s}^{(m-1)},Z_{s}^{(m-1)})-f(s,X_{s}%
^{(m-2)},Y_{s}^{(m-1)},Z_{s}^{(m-1)})\text{,}\\
\hat{g}_{s}^{(m)}=g(s,X_{s}^{(m-1)},Y_{s}^{(m-1)},Z_{s}^{(m-1)})-g(s,X_{s}%
^{(m-2)},Y_{s}^{(m-1)},Z_{s}^{(m-1)}).
\end{array}
\]
Similar to the proof of (\ref{e-3-3}), we get%
\begin{equation}
\mathbb{\hat{E}}\left[  \left \vert \hat{Y}_{t}^{(m)}\right \vert ^{p^{\prime}%
}\right]  \leq C_{2}(p^{\prime})L_{3}^{p^{\prime}}(1+T)^{p^{\prime}%
}\mathbb{\hat{E}}\left[  \sup_{s\leq T}\left \vert \hat{X}_{s}^{(m-1)}%
\right \vert ^{p^{\prime}}\right]  . \label{e-3-6}%
\end{equation}
By (\ref{e-3-5}) and (\ref{e-3-6}), we deduce%
\[
\mathbb{\hat{E}}\left[  \sup_{t\leq T}\left \vert \hat{X}_{t}^{(m)}\right \vert
^{p^{\prime}}\right]  \leq \Lambda_{p^{\prime}}\mathbb{\hat{E}}\left[
\sup_{t\leq T}\left \vert \hat{X}_{t}^{(m-1)}\right \vert ^{p^{\prime}}\right]
\text{ for }m\geq2,
\]
which implies%
\[
\mathbb{\hat{E}}\left[  \sup_{t\leq T}\left \vert \hat{X}_{t}^{(m)}\right \vert
^{p^{\prime}}\right]  \leq \Lambda_{p^{\prime}}^{m-1}\mathbb{\hat{E}}\left[
\sup_{t\leq T}\left \vert \hat{X}_{t}^{(1)}\right \vert ^{p^{\prime}}\right]
\text{ for }m\geq1.
\]
For each $N$, $k\geq1$, we obtain%
\begin{align*}
\left(  \mathbb{\hat{E}}\left[  \sup_{t\leq T}\left \vert X_{t}^{(N+k)}%
-X_{t}^{(N)}\right \vert ^{p^{\prime}}\right]  \right)  ^{1/p^{\prime}}  &
\leq \sum_{m=N+1}^{\infty}\left(  \mathbb{\hat{E}}\left[  \sup_{t\leq
T}\left \vert \hat{X}_{t}^{(m)}\right \vert ^{p^{\prime}}\right]  \right)
^{1/p^{\prime}}\\
&  \leq(1-\Lambda_{p^{\prime}}^{1/p^{\prime}})^{-1}\Lambda_{p^{\prime}%
}^{N/p^{\prime}}\left(  \mathbb{\hat{E}}\left[  \sup_{t\leq T}\left \vert
\hat{X}_{t}^{(1)}\right \vert ^{p^{\prime}}\right]  \right)  ^{1/p^{\prime}},
\end{align*}
which tends to $0$ as $N\rightarrow \infty$. Thus there exists a $X\in
S_{G}^{p^{\prime}}(0,T;\mathbb{R}^{n})$ such that%
\begin{equation}
\mathbb{\hat{E}}\left[  \sup_{t\leq T}\left \vert X_{t}^{(m)}-X_{t}\right \vert
^{p^{\prime}}\right]  \rightarrow0\text{ as }m\rightarrow \infty. \label{e-3-7}%
\end{equation}
For each $N$, $k\geq1$, similar to the proof of (\ref{e-3-6}), we can deduce%
\begin{equation}
\left \vert Y_{t}^{(N+k)}-Y_{t}^{(N)}\right \vert ^{p}\leq C_{2}(p)L_{3}%
^{p}(1+T)^{p}\mathbb{\hat{E}}_{t}\left[  \sup_{s\leq T}\left \vert
X_{s}^{(N+k-1)}-X_{s}^{(N-1)}\right \vert ^{p}\right]  . \label{e-3-8}%
\end{equation}
By Doob's inequality for $G$-martingale (see \cite{STZ, Song11}), we have%
\begin{equation}
\mathbb{\hat{E}}\left[  \sup_{t\leq T}\mathbb{\hat{E}}_{t}\left[  \sup_{s\leq
T}\left \vert X_{s}^{(N+k-1)}-X_{s}^{(N-1)}\right \vert ^{p}\right]  \right]
\leq \frac{p^{\prime}}{p^{\prime}-p}\left(  \mathbb{\hat{E}}\left[  \sup_{s\leq
T}\left \vert X_{s}^{(N+k-1)}-X_{s}^{(N-1)}\right \vert ^{p^{\prime}}\right]
\right)  ^{p/p^{\prime}}. \label{e-3-9}%
\end{equation}
It follows from (\ref{e-3-7}), (\ref{e-3-8}) and (\ref{e-3-9}) that%
\[
\mathbb{\hat{E}}\left[  \sup_{t\leq T}\left \vert Y_{t}^{(N+k)}-Y_{t}%
^{(N)}\right \vert ^{p}\right]  \rightarrow0\text{ as }N\rightarrow \infty.
\]
Thus there exists a $Y\in S_{G}^{p}(0,T)$ such that%
\begin{equation}
\mathbb{\hat{E}}\left[  \sup_{t\leq T}\left \vert Y_{t}^{(m)}-Y_{t}\right \vert
^{p}\right]  \rightarrow0\text{ as }m\rightarrow \infty. \label{e-3-10}%
\end{equation}
Noting that $\sup_{m\geq1}\mathbb{\hat{E}}\left[  \sup_{t\leq T}(|X_{t}%
^{(m)}|+|Y_{t}^{(m)}|)^{p}\right]  <\infty$, by (ii) of Theorem \ref{th-2-3},
we get%
\[
\mathbb{\hat{E}}\left[  \left(  \int_{0}^{T}|Z_{t}^{(N+k)}-Z_{t}^{(N)}%
|^{2}dt\right)  ^{p/2}\right]  \rightarrow0\text{ as }N\rightarrow \infty.
\]
Thus there exists a $Z\in M_{G}^{2,p}(0,T)$ such that%
\begin{equation}
\mathbb{\hat{E}}\left[  \left(  \int_{0}^{T}|Z_{t}^{(m)}-Z_{t}|^{2}dt\right)
^{p/2}\right]  \rightarrow0\text{ as }m\rightarrow \infty. \label{e-3-11}%
\end{equation}
From (\ref{e-2-2}), we obtain%
\begin{align*}
\mathbb{\hat{E}}\left[  \sup_{t\leq T}\left \vert \int_{t}^{T}Z_{s}^{(m)}%
dB_{s}-\int_{t}^{T}Z_{s}dB_{s}\right \vert ^{p}\right]   &  \leq2^{p}%
\mathbb{\hat{E}}\left[  \sup_{t\leq T}\left \vert \int_{0}^{t}Z_{s}^{(m)}%
dB_{s}-\int_{0}^{t}Z_{s}dB_{s}\right \vert ^{p}\right] \\
&  \leq2^{p}\bar{\sigma}^{p}C(p)\mathbb{\hat{E}}\left[  \left(  \int_{0}%
^{T}|Z_{t}^{(m)}-Z_{t}|^{2}dt\right)  ^{p/2}\right] \\
&  \rightarrow0\text{ as }m\rightarrow \infty.
\end{align*}
Since%
\begin{align*}
&  \sup_{t\leq T}\left \vert \int_{t}^{T}f(s,X_{s}^{(m-1)},Y_{s}^{(m)}%
,Z_{s}^{(m)})ds-\int_{t}^{T}f(s,X_{s},Y_{s},Z_{s})ds\right \vert ^{p}\\
&  \leq \left(  \int_{0}^{T}|f(s,X_{s}^{(m-1)},Y_{s}^{(m)},Z_{s}^{(m)}%
)-f(s,X_{s},Y_{s},Z_{s})|ds\right)  ^{p}\\
&  \leq3^{p-1}L_{3}^{p}T^{p}\sup_{s\leq T}\left \vert X_{s}^{(m-1)}%
-X_{s}\right \vert ^{p}+3^{p-1}L_{1}^{p}T^{p}\sup_{s\leq T}\left \vert
Y_{s}^{(m)}-Y_{s}\right \vert ^{p}+3^{p-1}L_{1}^{p}T^{p/2}\left(  \int_{0}%
^{T}|Z_{s}^{(m)}-Z_{s}|^{2}ds\right)  ^{p/2},
\end{align*}
we get%
\[
\mathbb{\hat{E}}\left[  \sup_{t\leq T}\left \vert \int_{t}^{T}f(s,X_{s}%
^{(m-1)},Y_{s}^{(m)},Z_{s}^{(m)})ds-\int_{t}^{T}f(s,X_{s},Y_{s},Z_{s}%
)ds\right \vert ^{p}\right]  \rightarrow0
\]
as $m\rightarrow \infty$ by (\ref{e-3-7}), (\ref{e-3-10}) and (\ref{e-3-11}).
Similarly, we can obtain%
\[
\mathbb{\hat{E}}\left[  \sup_{t\leq T}\left \vert \int_{t}^{T}g(s,X_{s}%
^{(m-1)},Y_{s}^{(m)},Z_{s}^{(m)})d\langle B\rangle_{s}-\int_{t}^{T}%
g(s,X_{s},Y_{s},Z_{s})d\langle B\rangle_{s}\right \vert ^{p}\right]
\rightarrow0,
\]%
\[
\mathbb{\hat{E}}\left[  \sup_{t\leq T}\left(  \left \vert \int_{0}%
^{t}(b(s,X_{s}^{(m)},Y_{s}^{(m)})-b(s,X_{s},Y_{s}))ds\right \vert +\left \vert
\int_{0}^{t}(h(s,X_{s}^{(m)},Y_{s}^{(m)})-h(s,X_{s},Y_{s}))d\langle
B\rangle_{s}\right \vert \right)  ^{p}\right]  \rightarrow0
\]
and%
\[
\mathbb{\hat{E}}\left[  \sup_{t\leq T}\left \vert \int_{0}^{t}(\sigma
(s,X_{s}^{(m)},Y_{s}^{(m)})-\sigma(s,X_{s},Y_{s}))dB_{s}\right \vert
^{p}\right]  \rightarrow0
\]
as $m\rightarrow \infty$. Set%
\[
K_{t}=Y_{t}-Y_{0}-\int_{0}^{t}f(s,X_{s},Y_{s},Z_{s})ds-\int_{0}^{t}%
g(s,X_{s},Y_{s},Z_{s})d\langle B\rangle_{s}-\int_{0}^{t}Z_{s}dB_{s}%
\]
for $t\in \lbrack0,T]$. It is clear that%
\[
\mathbb{\hat{E}}\left[  \sup_{t\leq T}\left \vert K_{t}^{(m)}-K_{t}\right \vert
^{p}\right]  \rightarrow0\text{ as }m\rightarrow \infty.
\]
Thus we can easily deduce that $K$ is a non-increasing $G$-martingale with
$K_{0}=0$ and $K_{T}\in L_{G}^{p}(\Omega_{T})$. Taking $m\rightarrow \infty$ in
(\ref{e-3-4}), we obtain that $(X,Y,Z,K)$ is an $L^{p}$-solution of $G$-FBSDE
(\ref{e-2-3}).
\end{proof}

\begin{remark}
For each fixed $\bar{\sigma}>\underline{\sigma}>0$, $T>0$, $L_{1}>0$ and
$p\in \lbrack2,\beta)$, it is easy to deduce from (\ref{e-3-1}) that there
exists a $\delta>0$ satisfying $\Lambda_{p}<1$ for each%
\begin{equation}
L_{2}L_{3}<\delta. \label{e-3-12}%
\end{equation}
The condition (\ref{e-3-12}) is called weakly coupling condition for $G$-FBSDE
(\ref{e-2-3}) (see \cite{P-T} for classical FBSDE).
\end{remark}

Now we consider the $L^{p}$-solution for $G$-FBSDE (\ref{e-2-3}) with
$p\in(1,2)$.

\begin{theorem}
\label{th-3-2}Suppose assumptions (H1) and (H2) hold. If $\sigma(\cdot)$ does
not depend on $y$ and%
\begin{equation}
\tilde{\Lambda}_{p}:=C_{1}(p)C_{2}(p)(nL_{2}L_{3})^{p}T^{p}(1+T)^{p}<1
\label{e-3-13}%
\end{equation}
for some $p\in(1,2\wedge \beta)$, then $G$-FBSDE (\ref{e-2-3}) has a unique
$L^{p}$-solution $(X,Y,Z,K)$.
\end{theorem}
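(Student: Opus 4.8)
The plan is to follow the two-stage scheme of the proof of Theorem~\ref{th-3-1} --- first uniqueness via the a priori estimates of Theorems~\ref{th-2-2} and~\ref{th-2-3}, then existence via a Picard iteration --- using the only new hypothesis, namely that $\sigma(\cdot)$ does not depend on $y$. Its role is that the $\hat{\sigma}$-term in the $G$-SDE estimate (\ref{e-2-4}) vanishes identically, so that the fixed-point constant drops from one involving $T^{p}+T^{p/2}$ to exactly $\tilde{\Lambda}_{p}$, and one never needs $\left(\int_{0}^{T}|\hat{Y}_{t}|^{2}dt\right)^{p/2}\leq C\int_{0}^{T}|\hat{Y}_{t}|^{p}dt$, which fails for $p\in(1,2)$.

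For uniqueness, let $(X,Y,Z,K)$ and $(X',Y',Z',K')$ be two $L^{p}$-solutions and set $\hat{X}=X-X'$, $\hat{Y}=Y-Y'$, $\hat{Z}=Z-Z'$. Since $\sigma$ is $y$-free, $\hat{\sigma}_{t}=\sigma(t,X'_{t},Y_{t})-\sigma(t,X'_{t},Y'_{t})\equiv0$, so Theorem~\ref{th-2-2} gives $\mathbb{\hat{E}}\left[\sup_{t\leq T}|\hat{X}_{t}|^{p}\right]\leq C_{1}(p)\mathbb{\hat{E}}\left[\left(\int_{0}^{T}(|\hat{b}_{t}|+|\hat{h}_{t}|)dt\right)^{p}\right]$; using $|\hat{b}_{t}|+|\hat{h}_{t}|\leq nL_{2}|\hat{Y}_{t}|$ from (H2) together with H\"older's inequality in $t$ yields $\mathbb{\hat{E}}\left[\sup_{t\leq T}|\hat{X}_{t}|^{p}\right]\leq C_{1}(p)(nL_{2})^{p}T^{p-1}\int_{0}^{T}\mathbb{\hat{E}}[|\hat{Y}_{t}|^{p}]dt$. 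Exactly as for (\ref{e-3-3}), Theorem~\ref{th-2-3}(i) with $|\hat{\phi}_{T}|\leq L_{3}|\hat{X}_{T}|$ and $|\hat{f}_{s}|+|\hat{g}_{s}|\leq L_{3}|\hat{X}_{s}|$ gives $\mathbb{\hat{E}}[|\hat{Y}_{t}|^{p}]\leq C_{2}(p)L_{3}^{p}(1+T)^{p}\mathbb{\hat{E}}\left[\sup_{s\leq T}|\hat{X}_{s}|^{p}\right]$. Combining the two bounds produces $\mathbb{\hat{E}}\left[\sup_{t\leq T}|\hat{X}_{t}|^{p}\right]\leq\tilde{\Lambda}_{p}\,\mathbb{\hat{E}}\left[\sup_{t\leq T}|\hat{X}_{t}|^{p}\right]$, so by (\ref{e-3-13}) $\hat{X}\equiv0$; then $\mathbb{\hat{E}}[|\hat{Y}_{t}|^{p}]=0$ for every $t$, continuity in $t$ upgrades this to $\sup_{t\leq T}|\hat{Y}_{t}|=0$ q.s., Theorem~\ref{th-2-3}(ii) forces $\hat{Z}\equiv0$ in $M_{G}^{2,p}(0,T)$, and the $G$-BSDE then gives $K=K'$.

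For existence, run the iteration (\ref{e-3-4}) with $X^{(0)}\equiv x_{0}$, solving at each step the $G$-BSDE and then the $G$-SDE; as in the proof of Theorem~\ref{th-3-1}, all iterates lie in $S_{G}^{\alpha}$, $M_{G}^{2,\alpha}$, $L_{G}^{\alpha}$ for every $\alpha<\beta$. Since $p\mapsto\tilde{\Lambda}_{p}$ is continuous and $\tilde{\Lambda}_{p}<1$, fix $p'\in(p,\beta)$ with $\tilde{\Lambda}_{p'}<1$. Repeating the uniqueness computation at exponent $p'$ (again $\hat{\sigma}^{(m)}\equiv0$) gives $\mathbb{\hat{E}}\left[\sup_{t\leq T}|\hat{X}_{t}^{(m)}|^{p'}\right]\leq\tilde{\Lambda}_{p'}\mathbb{\hat{E}}\left[\sup_{t\leq T}|\hat{X}_{t}^{(m-1)}|^{p'}\right]$, hence $\mathbb{\hat{E}}\left[\sup_{t\leq T}|\hat{X}_{t}^{(m)}|^{p'}\right]\leq\tilde{\Lambda}_{p'}^{m-1}\mathbb{\hat{E}}\left[\sup_{t\leq T}|\hat{X}_{t}^{(1)}|^{p'}\right]$, and summing $p'$-th roots as in Theorem~\ref{th-3-1} shows $(X^{(m)})$ is Cauchy, with limit $X\in S_{G}^{p'}(0,T;\mathbb{R}^{n})$. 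As in (\ref{e-3-8})--(\ref{e-3-9}), the bound $|Y_{t}^{(N+k)}-Y_{t}^{(N)}|^{p}\leq C_{2}(p)L_{3}^{p}(1+T)^{p}\mathbb{\hat{E}}_{t}\left[\sup_{s\leq T}|X_{s}^{(N+k-1)}-X_{s}^{(N-1)}|^{p}\right]$ together with Doob's inequality for $G$-martingales at the exponents $p<p'$ shows $(Y^{(m)})$ is Cauchy in $S_{G}^{p}(0,T)$, with limit $Y$; then Theorem~\ref{th-2-3}(ii) gives convergence of $(Z^{(m)})$ in $M_{G}^{2,p}(0,T)$ to some $Z$. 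Finally, using (\ref{e-2-2}) for the $dB$-integrals and (H2) for the $dt$- and $d\langle B\rangle$-integrals, let $m\to\infty$ in (\ref{e-3-4}), put $K_{t}=Y_{t}-Y_{0}-\int_{0}^{t}f(s,X_{s},Y_{s},Z_{s})ds-\int_{0}^{t}g(s,X_{s},Y_{s},Z_{s})d\langle B\rangle_{s}-\int_{0}^{t}Z_{s}dB_{s}$, check $K^{(m)}\to K$ and hence that $K$ is a non-increasing $G$-martingale with $K_{0}=0$ and $K_{T}\in L_{G}^{p}(\Omega_{T})$, and conclude that $(X,Y,Z,K)$ is an $L^{p}$-solution.

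The main obstacle --- and the reason the extra hypothesis on $\sigma$ is indispensable here rather than in Theorem~\ref{th-3-1} --- is the $G$-SDE estimate at exponent $p<2$: in the general case (\ref{e-2-4}) leaves a term $\mathbb{\hat{E}}\left[\left(\int_{0}^{T}|\hat{\sigma}_{t}|^{2}dt\right)^{p/2}\right]$, and dominating it by $C\int_{0}^{T}\mathbb{\hat{E}}[|\hat{Y}_{t}|^{p}]dt$ would require $\left(\int_{0}^{T}|\hat{Y}_{t}|^{2}dt\right)^{p/2}\leq C\int_{0}^{T}|\hat{Y}_{t}|^{p}dt$, which is false for $p\in(1,2)$; assuming $\sigma$ independent of $y$ annihilates this term and is exactly what makes the contraction constant equal to $\tilde{\Lambda}_{p}$. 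A secondary technical point is that the Cauchy property of $(Y^{(m)})$ passes through the $G$-martingale Doob inequality, which (unlike its classical counterpart) requires a strictly larger integrability exponent $p'>p$ for the $X^{(m)}$ --- this is precisely why the iteration must be carried out at the two exponents $p$ and $p'$.
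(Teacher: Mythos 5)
Your proof is correct and follows exactly the route the paper intends: the published proof of Theorem \ref{th-3-2} is simply omitted with the remark that it is ``similar to the proof of Theorem \ref{th-3-1},'' and your writeup is precisely that adaptation, with the key observation that $\sigma$ being $y$-free makes $\hat{\sigma}\equiv 0$ so the contraction constant becomes $\tilde{\Lambda}_{p}$ and the problematic inequality for $p\in(1,2)$ is never needed. No gaps.
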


\begin{proof}
The proof is similar to the proof of Theorem \ref{th-3-1}. We omit it.
\end{proof}

\begin{remark}
If $\sigma(\cdot)$ contains $y$ and $p\in(1,2\wedge \beta)$, then $p/2<1$ and
we can not get%
\[
\left(  \int_{0}^{T}|\hat{Y}_{t}|^{2}dt\right)  ^{p/2}\leq C\int_{0}^{T}%
|\hat{Y}_{t}|^{p}dt
\]
in (\ref{e-3-1-1}), where $C>0$ is a constant independent of $\hat{Y}$. Thus
we need the assumption that $\sigma(\cdot)$ is independent of $y$ for $p<2$..
\end{remark}

The following proposition is the estimates for $G$-FBSDE (\ref{e-2-3}).

\begin{proposition}
\label{pr-3-3}Suppose that $b^{(i)}(\cdot)$, $h^{(i)}(\cdot)$, $\sigma
^{(i)}(\cdot)$, $f_{i}(\cdot)$, $g_{i}(\cdot)$, $\phi_{i}(\cdot)$ satisfy
assumptions (H1) and (H2) for $i=1$, $2$. For each fixed $p\in(1,\beta)$, let
$(X^{(i)},Y^{(i)},Z^{(i)},K^{(i)})$ be the $L^{p}$-solution of $G$-FBSDE%
\[
\left \{
\begin{array}
[c]{rl}%
dX_{t}^{(i)}= & b^{(i)}(t,X_{t}^{(i)},Y_{t}^{(i)})dt+h^{(i)}(t,X_{t}%
^{(i)},Y_{t}^{(i)})d\langle B\rangle_{t}+\sigma^{(i)}(t,X_{t}^{(i)}%
,Y_{t}^{(i)})dB_{t},\\
dY_{t}^{(i)}= & f_{i}(t,X_{t}^{(i)},Y_{t}^{(i)},Z_{t}^{(i)})dt+g_{i}%
(t,X_{t}^{(i)},Y_{t}^{(i)},Z_{t}^{(i)})d\langle B\rangle_{t}+Z_{t}^{(i)}%
dB_{t}+dK_{t}^{(i)},\\
X_{0}^{(i)}= & x_{i}\in \mathbb{R}^{n},\text{ }Y_{T}^{(i)}=\phi_{i}(X_{T}%
^{(i)}),
\end{array}
\right.
\]
for $i=1$, $2$. We have the following estimates.

\begin{description}
\item[(i)] If $p\geq2$ and $\Lambda_{p}$ defined in (\ref{e-3-1}) satisfies
$\Lambda_{p}<1$, then there exists a constant $C_{4}$ depending on $p$, $T$,
$L_{1}$, $L_{2}$, $L_{3}$, $\bar{\sigma}$ and $\underline{\sigma}$ such that
\begin{equation}
\mathbb{\hat{E}}\left[  \sup_{t\leq T}\left \vert \hat{X}_{t}\right \vert
^{p}\right]  \leq C_{4}\mathbb{\hat{E}}\left[  \left(  |\hat{x}|+|\hat{\phi
}_{T}|+\int_{0}^{T}(|\hat{b}_{t}|+|\hat{h}_{t}|+|\hat{f}_{t}|+|\hat{g}%
_{t}|)dt\right)  ^{p}+\left(  \int_{0}^{T}|\hat{\sigma}_{t}|^{2}dt\right)
^{p/2}\right]  , \label{e-3-14}%
\end{equation}
where $\hat{X}_{t}=X_{t}^{(1)}-X_{t}^{(2)}$, $\hat{x}=x_{1}-x_{2}$, $\hat
{\phi}_{T}=\phi_{1}(X_{T}^{(2)})-\phi_{2}(X_{T}^{(2)})$, $\hat{b}_{t}%
=b^{(1)}(t,X_{t}^{(2)},Y_{t}^{(2)})-b^{(2)}(t,X_{t}^{(2)},Y_{t}^{(2)})$,
$\hat{h}_{t}=h^{(1)}(t,X_{t}^{(2)},Y_{t}^{(2)})-h^{(2)}(t,X_{t}^{(2)}%
,Y_{t}^{(2)})$, $\hat{\sigma}_{t}=\sigma^{(1)}(t,X_{t}^{(2)},Y_{t}%
^{(2)})-\sigma^{(2)}(t,X_{t}^{(2)},Y_{t}^{(2)})$,
\[
\hat{f}_{t}=f_{1}(t,X_{t}^{(2)},Y_{t}^{(2)},Z_{t}^{(2)})-f_{2}(t,X_{t}%
^{(2)},Y_{t}^{(2)},Z_{t}^{(2)}),\text{ }\hat{g}_{t}=g_{1}(t,X_{t}^{(2)}%
,Y_{t}^{(2)},Z_{t}^{(2)})-g_{2}(t,X_{t}^{(2)},Y_{t}^{(2)},Z_{t}^{(2)}).
\]

\item[(ii)] If $p\in(1,2)$, $\sigma(\cdot)$ does not depend on $y$ and
$\tilde{\Lambda}_{p}$ defined in (\ref{e-3-13}) satisfies $\tilde{\Lambda}%
_{p}<1$, then there exists a constant $C_{5}$ depending on $p$, $T$, $L_{1}$,
$L_{2}$, $L_{3}$, $\bar{\sigma}$ and $\underline{\sigma}$ such that%
\begin{equation}
\mathbb{\hat{E}}\left[  \sup_{t\leq T}\left \vert \hat{X}_{t}\right \vert
^{p}\right]  \leq C_{5}\mathbb{\hat{E}}\left[  \left(  |\hat{x}|+|\hat{\phi
}_{T}|+\int_{0}^{T}(|\hat{b}_{t}|+|\hat{h}_{t}|+|\hat{f}_{t}|+|\hat{g}%
_{t}|)dt\right)  ^{p}+\left(  \int_{0}^{T}|\hat{\sigma}_{t}|^{2}dt\right)
^{p/2}\right]  , \label{e-3-15}%
\end{equation}
where $\hat{\sigma}_{t}=\sigma^{(1)}(t,X_{t}^{(2)})-\sigma^{(2)}(t,X_{t}%
^{(2)})$, $\hat{X}_{t}$, $\hat{x}$, $\hat{\phi}_{T}$, $\hat{b}_{t}$, $\hat
{h}_{t}$, $\hat{f}_{t}$ and $\hat{g}_{t}$ are the same as (i).
\end{description}
\end{proposition}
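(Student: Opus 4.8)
The plan is to imitate the uniqueness argument from the proof of Theorem \ref{th-3-1} (resp. Theorem \ref{th-3-2}), but now tracking the inhomogeneous terms coming from the fact that the two equations have different coefficients. First I would apply the estimate (\ref{e-2-4}) from Theorem \ref{th-2-2} to the forward difference $\hat X_t = X_t^{(1)}-X_t^{(2)}$, treating $(X^{(1)},Y^{(1)})$ and $(X^{(2)},Y^{(2)})$ as inputs: writing $\tilde b(t) = b^{(1)}(t,X_t^{(1)},Y_t^{(1)}) - b^{(2)}(t,X_t^{(2)},Y_t^{(2)})$ and splitting it as $[b^{(1)}(t,X_t^{(1)},Y_t^{(1)}) - b^{(1)}(t,X_t^{(2)},Y_t^{(2)})] + \hat b_t$, the Lipschitz assumption (H2) controls the first bracket by $nL_1|\hat X_t| + nL_2|\hat Y_t|$. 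Collecting these, Theorem \ref{th-2-2} (or the version with $x_1 \neq x_2$, which contributes an extra $|\hat x|^p$ term through $|\hat X_0|^p$) yields
\[
\mathbb{\hat E}\left[\sup_{t\leq T}|\hat X_t|^p\right] \leq C_1(p)(nL_2)^p(T^{p-1}+T^{(p-2)/2})\int_0^T \mathbb{\hat E}[|\hat Y_t|^p]\,dt + C_1(p)\,\mathbb{\hat E}\big[\mathcal{I}^p\big],
\]
where $\mathcal{I} = |\hat x| + \int_0^T(|\hat b_t|+|\hat h_t|)\,dt + (\int_0^T|\hat\sigma_t|^2 dt)^{1/2}$ absorbs all the coefficient-difference contributions on the forward side (for part (ii), with $\sigma$ independent of $y$, the $(\int|\hat\sigma_t|^2)^{p/2}$ term appears directly since $\hat\sigma_t$ no longer involves $\hat Y$).

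Next I would apply part (i) of Theorem \ref{th-2-3} to the backward difference $\hat Y_t = Y_t^{(1)}-Y_t^{(2)}$, again splitting each driver difference $f_1(t,X_t^{(1)},Y_t^{(1)},Z_t^{(1)}) - f_2(t,X_t^{(2)},Y_t^{(2)},Z_t^{(2)})$ into a Lipschitz part controlled by $L_3|\hat X_t| + L_1(|\hat Y_t|+|\hat Z_t|)$ plus the inhomogeneous part $\hat f_t$, and similarly $|\hat\phi_T| \leq L_3|\hat X_T| + |\hat\phi_T^{\mathrm{diff}}|$. The $|\hat Y_t|$ and $|\hat Z_t|$ terms can be absorbed by the linearity-type estimate in Theorem \ref{th-2-3} exactly as in the original proof, so I obtain
\[
\mathbb{\hat E}[|\hat Y_t|^p] \leq C_2(p)L_3^p(1+T)^p\,\mathbb{\hat E}\left[\sup_{s\leq T}|\hat X_s|^p\right] + C_2(p)\,\mathbb{\hat E}\big[\mathcal{J}^p\big],
\]
with $\mathcal{J} = |\hat x| + |\hat\phi_T| + \int_0^T(|\hat b_t|+|\hat h_t|+|\hat f_t|+|\hat g_t|)\,dt + (\int_0^T|\hat\sigma_t|^2 dt)^{1/2}$ (here one uses that $\sup_s|\hat X_s|^p$ already controls $|\hat X_T|^p$ and that $\hat x, \hat b, \hat h, \hat\sigma$ feed into $\sup_s|\hat X_s|^p$ via the first estimate).

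Substituting the second estimate into the first and using $\Lambda_p < 1$ (resp. $\tilde\Lambda_p < 1$) — noting $\Lambda_p = C_1(p)C_2(p)(nL_2L_3)^p(T^p+T^{p/2})(1+T)^p$ is precisely the coefficient of $\mathbb{\hat E}[\sup|\hat X|^p]$ produced by the composition — I can move that term to the left side to get $(1-\Lambda_p)\,\mathbb{\hat E}[\sup_{t\leq T}|\hat X_t|^p] \leq C\,\mathbb{\hat E}[\mathcal{J}^p]$ (after a Hölder/power-mean step to merge $\mathbb{\hat E}[\mathcal I^p]$ and $\mathbb{\hat E}[\mathcal J^p]$ into a single constant times $\mathbb{\hat E}[\mathcal J^p]$), which is exactly (\ref{e-3-14}) with $C_4 = C/(1-\Lambda_p)$; part (ii) is identical with $\tilde\Lambda_p$ in place of $\Lambda_p$ and $C_5 = C/(1-\tilde\Lambda_p)$. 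The main obstacle — and the only place where parts (i) and (ii) genuinely diverge — is the treatment of the $\sigma$-term in the forward estimate: for $p \geq 2$ the bound $(\int_0^T|\hat Y_t|^2 dt)^{p/2} \leq T^{(p-2)/2}\int_0^T|\hat Y_t|^p dt$ lets one feed $\hat\sigma$ through the $\int_0^T\mathbb{\hat E}[|\hat Y_t|^p]dt$ channel and close the loop, whereas for $p \in (1,2)$ that inequality fails (as the paper stresses after Theorem \ref{th-3-2}), so one must instead assume $\sigma$ does not depend on $y$, which makes $\hat\sigma_t$ purely inhomogeneous and removes it from the contraction coefficient entirely — this is why $\tilde\Lambda_p$ lacks the $T^{p/2}$ term and why the hypothesis on $\sigma$ is needed in (ii).
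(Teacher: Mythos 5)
Your overall strategy is the same as the paper's: apply Theorem \ref{th-2-2} to the forward difference and part (i) of Theorem \ref{th-2-3} to the backward difference, split each coefficient difference into a Lipschitz part plus the inhomogeneous part $\hat b_t,\hat h_t,\hat\sigma_t,\hat f_t,\hat g_t,\hat\phi_T$, compose the two estimates, and absorb the $\mathbb{\hat E}[\sup_t|\hat X_t|^p]$ term using $\Lambda_p<1$ (resp. $\tilde\Lambda_p<1$). Your diagnosis of why (i) and (ii) diverge on the $\sigma$-term is also exactly the paper's. The handling of the distinct initial values (the paper shifts to $\bar X^{(i)}_t=X^{(i)}_t-x_i$ so Theorem \ref{th-2-2} applies, then folds $\hat x$ into the coefficients and recovers $\hat X$ at the end via $|\hat X_t|^p\le 2^{p-1}(|\tilde X_t|^p+|\hat x|^p)$) matches what you sketch.

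There is, however, one genuine gap: your two displayed intermediate estimates assert the leading coefficients $C_1(p)(nL_2)^p(T^{p-1}+T^{(p-2)/2})$ and $C_2(p)L_3^p(1+T)^p$ \emph{exactly}, with no loss from the splitting. That is not what the naive inequality $(a_1+a_2)^p\le 2^{p-1}(a_1^p+a_2^p)$ gives you: splitting $\bigl(nL_2\int_0^T|\hat Y_t|dt+\int_0^T(|\hat b_t|+|\hat h_t|)dt\bigr)^p$ that way inflates the leading term by $2^{p-1}$, and after composing the forward and backward estimates the contraction coefficient becomes (at least) $4^{p-1}\Lambda_p$, which the hypothesis $\Lambda_p<1$ does not control. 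This issue does not arise in the uniqueness proof of Theorem \ref{th-3-1} that you are imitating, because there the additive remainders vanish. The paper's fix — and the one ingredient your sketch is missing — is the refined inequality
\begin{equation*}
(a_1+a_2)^p\le(1+\varepsilon)a_1^p+C(p,\varepsilon)a_2^p\quad\text{for every }\varepsilon>0,
\end{equation*}
proved via the mean value theorem (equation (\ref{e-3-16})). Applying it in both splittings yields the composed coefficient $(1+\varepsilon)^2\Lambda_p$, and since $\Lambda_p<1$ is strict one may choose $\varepsilon_0$ with $(1+\varepsilon_0)^2\Lambda_p<1$ and then absorb. You should state and use this inequality explicitly; without it the absorption step at the end of your argument fails under the stated hypothesis.
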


\begin{proof}
We only prove (i). The proof of (ii) is similar. For each $a_{1}>0$ and
$a_{2}>0$, by the mean value theorem, we have%
\[
(a_{1}+a_{2})^{p}-a_{1}^{p}=p(a_{1}+\theta a_{2})^{p-1}a_{2}\leq
p2^{p-1}(a_{1}^{p-1}a_{2}+a_{2}^{p}),
\]
where $\theta \in(0,1)$. From this, we can deduce%
\begin{equation}
(a_{1}+a_{2})^{p}\leq(1+\varepsilon)a_{1}^{p}+C(p,\varepsilon)a_{2}^{p}\text{
for each }\varepsilon>0, \label{e-3-16}%
\end{equation}
where%
\[
C(p,\varepsilon)=p2^{p-1}+p^{p-1}2^{(p-1)p}\varepsilon^{-(p-1)}.
\]
Set $\bar{X}_{t}^{(i)}=X_{t}^{(i)}-x_{i}$ for $i=1$, $2$, and $\tilde{X}%
_{t}=\bar{X}_{t}^{(1)}-\bar{X}_{t}^{(2)}$. It is easy to check that $(\bar
{X}^{(i)},Y^{(i)},Z^{(i)},K^{(i)})$ satisfies the $G$-FBSDE%
\[
\left \{
\begin{array}
[c]{rl}%
d\bar{X}_{t}^{(i)}= & b^{(i)}(t,\bar{X}_{t}^{(i)}+x_{i},Y_{t}^{(i)}%
)dt+h^{(i)}(t,\bar{X}_{t}^{(i)}+x_{i},Y_{t}^{(i)})d\langle B\rangle_{t}%
+\sigma^{(i)}(t,\bar{X}_{t}^{(i)}+x_{i},Y_{t}^{(i)})dB_{t},\\
dY_{t}^{(i)}= & f_{i}(t,\bar{X}_{t}^{(i)}+x_{i},Y_{t}^{(i)},Z_{t}%
^{(i)})dt+g_{i}(t,\bar{X}_{t}^{(i)}+x_{i},Y_{t}^{(i)},Z_{t}^{(i)})d\langle
B\rangle_{t}+Z_{t}^{(i)}dB_{t}+dK_{t}^{(i)},\\
\bar{X}_{0}^{(i)}= & 0\in \mathbb{R}^{n},\text{ }Y_{T}^{(i)}=\phi_{i}(\bar
{X}_{T}^{(i)}+x_{i}),
\end{array}
\right.
\]
for $i=1$, $2$. Similar to the proof of Theorem \ref{th-2-2}, we have%
\[
\mathbb{\hat{E}}\left[  \sup_{t\leq T}\left \vert \tilde{X}_{t}\right \vert
^{p}\right]  \leq C_{1}(p)\mathbb{\hat{E}}\left[  \left(  \int_{0}^{T}%
(|\tilde{b}_{t}|+|\tilde{h}_{t}|)dt\right)  ^{p}+\left(  \int_{0}^{T}%
|\tilde{\sigma}_{t}|^{2}dt\right)  ^{p/2}\right]  ,
\]
where $\tilde{b}_{t}=b^{(1)}(t,\bar{X}_{t}^{(2)}+x_{1},Y_{t}^{(1)}%
)-b^{(2)}(t,\bar{X}_{t}^{(2)}+x_{2},Y_{t}^{(2)})$, $\tilde{h}_{t}%
=h^{(1)}(t,\bar{X}_{t}^{(2)}+x_{1},Y_{t}^{(1)})-h^{(2)}(t,\bar{X}_{t}%
^{(2)}+x_{2},Y_{t}^{(2)})$, $\tilde{\sigma}_{t}=\sigma^{(1)}(t,\bar{X}%
_{t}^{(2)}+x_{1},Y_{t}^{(1)})-\sigma^{(2)}(t,\bar{X}_{t}^{(2)}+x_{2}%
,Y_{t}^{(2)})$. From (H2), it is easy to verify that%
\[
|\tilde{b}_{t}|+|\tilde{h}_{t}|\leq nL_{2}\left \vert \hat{Y}_{t}\right \vert
+nL_{1}|\hat{x}|+|\hat{b}_{t}|+|\hat{h}_{t}|,\text{ }|\tilde{\sigma}_{t}|\leq
nL_{2}\left \vert \hat{Y}_{t}\right \vert +nL_{1}|\hat{x}|+|\hat{\sigma}_{t}|,
\]
where $\hat{Y}_{t}=Y_{t}^{(1)}-Y_{t}^{(2)}$. Similar to (\ref{e-3-2}), by
(\ref{e-3-16}), we obtain, for each $\varepsilon>0$,
\begin{align*}
\mathbb{\hat{E}}\left[  \sup_{t\leq T}\left \vert \tilde{X}_{t}\right \vert
^{p}\right]   &  \leq(1+\varepsilon)C_{1}(p)(nL_{2})^{p}(T^{p-1}%
+T^{(p-2)/2})\int_{0}^{T}\mathbb{\hat{E}}[|\hat{Y}_{t}|^{p}]dt\\
&  \  \  \ +C_{6}\mathbb{\hat{E}}\left[  \left(  |\hat{x}|+\int_{0}^{T}(|\hat
{b}_{t}|+|\hat{h}_{t}|)dt\right)  ^{p}+\left(  \int_{0}^{T}|\hat{\sigma}%
_{t}|^{2}dt\right)  ^{p/2}\right]  ,
\end{align*}
where the constant $C_{6}>0$ depends on $p$, $T$, $L_{1}$, $\bar{\sigma}$ and
$\varepsilon$. Similar to (\ref{e-3-3}), we can get, for each $\varepsilon
>0$,
\begin{align*}
\mathbb{\hat{E}}[|\hat{Y}_{t}|^{p}]  &  \leq(1+\varepsilon)C_{2}(p)L_{3}%
^{p}(1+T)^{p}\mathbb{\hat{E}}\left[  \sup_{s\leq T}\left \vert \tilde{X}%
_{s}\right \vert ^{p}\right] \\
&  \  \  \ +C_{7}\mathbb{\hat{E}}\left[  \left(  |\hat{x}|+|\hat{\phi}_{T}%
|+\int_{0}^{T}(|\hat{f}_{t}|+|\hat{g}_{t}|)dt\right)  ^{p}\right]  ,
\end{align*}
where the constant $C_{7}>0$ depends on $p$, $T$, $L_{1}$, $L_{3}$,
$\bar{\sigma}$, $\underline{\sigma}$ and $\varepsilon$. Thus we obtain%
\[
\lbrack1-(1+\varepsilon)\Lambda_{p}]\mathbb{\hat{E}}\left[  \sup_{t\leq
T}\left \vert \tilde{X}_{t}\right \vert ^{p}\right]  \leq C_{8}\mathbb{\hat{E}%
}\left[  \left(  |\hat{x}|+|\hat{\phi}_{T}|+\int_{0}^{T}(|\hat{b}_{t}%
|+|\hat{h}_{t}|+|\hat{f}_{t}|+|\hat{g}_{t}|)dt\right)  ^{p}+\left(  \int
_{0}^{T}|\hat{\sigma}_{t}|^{2}dt\right)  ^{p/2}\right]  ,
\]
where the constant $C_{8}>0$ depends on $p$, $T$, $L_{1}$, $L_{2}$, $L_{3}$,
$\bar{\sigma}$, $\underline{\sigma}$ and $\varepsilon$. Since $\Lambda_{p}<1$,
we can take $\varepsilon_{0}>0$ such that $(1+\varepsilon_{0})\Lambda_{p}<1$.
Note that $|\hat{X}_{t}|^{p}\leq2^{p-1}(\left \vert \tilde{X}_{t}\right \vert
^{p}+|\hat{x}|^{p})$, then we obtain (\ref{e-3-14}).
\end{proof}

\section{Comparison theorem for $G$-FBSDEs}

For simplicity, we only study the comparison theorem for $p=2$. The results
for $p\not =2$ are similar. Consider the following $G$-FBSDEs:%
\begin{equation}
\left \{
\begin{array}
[c]{rl}%
dX_{t}^{(i)}= & b(t,X_{t}^{(i)},Y_{t}^{(i)})dt+h(t,X_{t}^{(i)},Y_{t}%
^{(i)})d\langle B\rangle_{t}+\sigma(t,X_{t}^{(i)},Y_{t}^{(i)})dB_{t},\\
dY_{t}^{(i)}= & f(t,X_{t}^{(i)},Y_{t}^{(i)},Z_{t}^{(i)})dt+g(t,X_{t}%
^{(i)},Y_{t}^{(i)},Z_{t}^{(i)})d\langle B\rangle_{t}+Z_{t}^{(i)}dB_{t}%
+dK_{t}^{(i)},\\
X_{0}^{(i)}= & x_{0}\in \mathbb{R}^{n},\text{ }Y_{T}^{(i)}=\phi_{i}(X_{T}%
^{(i)}),\text{ }i=1,2.
\end{array}
\right.  \label{e-4-1}%
\end{equation}

\begin{theorem}
\label{th-4-1}Suppose that assumptions (H1) and (H2) hold for $i=1$, $2$ with
$\beta>2$. Then there exists a $\delta>0$ depending on $n$, $T$, $L_{1}$,
$\bar{\sigma}$ and $\underline{\sigma}$ such that the following results hold.

\begin{description}
\item[(i)] If $L_{2}L_{3}<\delta$, then $G$-FBSDE (\ref{e-4-1}) has a unique
$L^{2}$-solution $(X^{(i)},Y^{(i)},Z^{(i)},K^{(i)})$ for $i=1$, $2$.

\item[(ii)] If $L_{2}L_{3}<\delta$ and $\phi_{1}(X_{T}^{(2)})\geq \phi
_{2}(X_{T}^{(2)})$ (resp. $\phi_{1}(X_{T}^{(1)})\geq \phi_{2}(X_{T}^{(1)})$),
then we have $Y_{0}^{(1)}\geq Y_{0}^{(2)}$.
\end{description}
\end{theorem}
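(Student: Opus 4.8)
The plan is to obtain part (i) directly from Theorem~\ref{th-3-1}, and to prove part (ii) by a duality argument under a fixed reference probability in $\mathcal{P}$. For part (i): as $\beta>2$ we take $p=2\in[2,\beta)$, and in (\ref{e-3-1}) the constant $\Lambda_2=C_1(2)C_2(2)(nL_2L_3)^2(T^2+T)(1+T)^2$ depends on $L_2,L_3$ only through $(nL_2L_3)^2$, the factors $C_1(2),C_2(2)$ depending only on $T,L_1,\bar\sigma,\underline\sigma$; hence there is $\delta>0$, depending only on $n,T,L_1,\bar\sigma,\underline\sigma$, with $L_2L_3<\delta\Rightarrow\Lambda_2<1$. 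Since $\phi_1$ and $\phi_2$ share the Lipschitz constant $L_3$, the same $\delta$ works for both $i$, and Theorem~\ref{th-3-1} yields the unique $L^2$-solutions $(X^{(i)},Y^{(i)},Z^{(i)},K^{(i)})$, $i=1,2$; here the hypothesis $\beta>2$ is exactly what makes $p=2$ admissible.

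For part (ii), treat the case $\phi_1(X_T^{(2)})\geq\phi_2(X_T^{(2)})$; the case $\phi_1(X_T^{(1)})\geq\phi_2(X_T^{(1)})$ is the same after expanding the terminal difference around $X^{(1)}$ and using the Lipschitz constant of $\phi_2$. Put $\hat X=X^{(1)}-X^{(2)}$, $\hat Y=Y^{(1)}-Y^{(2)}$, $\hat Z=Z^{(1)}-Z^{(2)}$, $\hat K=K^{(1)}-K^{(2)}$. Interpolating the Lipschitz increments of (H2) in the usual way, there are bounded adapted coefficients (bounded by $L_1$, $L_2$ or $L_3$ according to their role) such that $(\hat X,\hat Y,\hat Z,\hat K)$ solves a linear $G$-FBSDE with $\hat X_0=0$ and $\hat Y_T=\xi_T\hat X_T+\Delta$, where $\Delta:=\phi_1(X_T^{(2)})-\phi_2(X_T^{(2)})\geq0$, $|\xi_T|\leq L_3$, the coefficients of $\hat Y$ appearing in the $\hat X$-equation are bounded by $L_2$ and those of $\hat X$ appearing in the $\hat Y$-equation by $L_3$, so $L_2L_3<\delta$ is inherited. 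The obstruction, compared with \cite{HJPS}, is that $\hat K=K^{(1)}-K^{(2)}$ is a difference of non-increasing $G$-martingales and hence not monotone, so this linear $G$-FBSDE cannot be handled by the $G$-BSDE comparison theorem. To get around this, fix $P\in\mathcal{P}$: under $P$ one has $B_t=\int_0^t\sqrt{\rho_s}\,dW_s^P$ for a $P$-Brownian motion $W^P$ and a density $\rho$ with values in $[\underline\sigma^2,\bar\sigma^2]$, $d\langle B\rangle_s=\rho_s\,ds$, the $G$-It\^{o} integrals coincide $P$-a.s.\ with the $P$-It\^{o} integrals, and the linear $G$-FBSDE holds $P$-a.s.\ as a classical linear FBSDE carrying the extra non-increasing finite-variation term $d\hat K$.

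Under $P$, introduce the dual linear FBSDE $(p^P,q^P,r^P)$, with $p^P$ scalar and forward, $p_0^P=1$, and $(q^P,r^P)$ solving a backward equation whose terminal value is $\xi_T$ transposed times $p_T^P$; the coefficients are the adjoints of the primal ones, arranged so that, after cancellation, $d\big(p_t^P\hat Y_t-\langle q_t^P,\hat X_t\rangle\big)=p_t^P\,d\hat K_t$ modulo a $P$-local martingale. This backward equation is not of the form of the BSDE in (\ref{e-2-3}) but of the type studied in \cite{EH}; since $L_2L_3<\delta$, a fixed-point argument under the weakly coupling condition (cf.\ \cite{P-T} and the proof of Theorem~\ref{th-3-1}) produces a unique solution with $p^P>0$ — seen by writing $p^P$ as an $O(L_2L_3)$-perturbation of the positive geometric SDE obtained in the decoupled case $L_2L_3=0$ — and with $L^q(P)$-bounds on $\sup_{t\leq T}|p_t^P|$ for all $q<\infty$ that are uniform over $P\in\mathcal{P}$, since the coefficients are bounded by constants independent of $P$ and $\rho\leq\bar\sigma^2$. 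Integrating over $[0,T]$, using $\hat X_0=0$, $p_0^P=1$, the choice of the terminal value of $q^P$ and $\hat Y_T=\xi_T\hat X_T+\Delta$ so that the $\hat X_T$-terms cancel, and taking $E_P$ (the integrability kills the local martingale), one gets
\[
\hat Y_0=E_P\big[p_T^P\Delta\big]-E_P\!\left[\int_0^T p_t^P\,dK_t^{(1)}\right]+E_P\!\left[\int_0^T p_t^P\,dK_t^{(2)}\right].
\]
Since $p^P>0$, $\Delta\geq0$ and $K^{(1)}$ is non-increasing, the first two terms are $\geq0$, and only the last (which is $\leq0$) must be controlled. As $K^{(2)}$ is a non-increasing $G$-martingale with $K_0^{(2)}=0$, Theorem~\ref{th-2-1} gives $\inf_{P\in\mathcal{P}}E_P[-K_T^{(2)}]=-\mathbb{\hat{E}}[K_T^{(2)}]=0$; choosing $P^m\in\mathcal{P}$ with $E_{P^m}[-K_T^{(2)}]\to0$ and using, with $1<q'<2$ and $q=q'/(q'-1)$,
\[
0\geq E_{P^m}\!\left[\int_0^T p_t^{P^m}\,dK_t^{(2)}\right]\geq-\big\|\sup\nolimits_{t\leq T}|p_t^{P^m}|\big\|_{L^q(P^m)}\big\|K_T^{(2)}\big\|_{L^{q'}(P^m)},
\]
the interpolation $\|K_T^{(2)}\|_{L^{q'}(P^m)}\leq\|K_T^{(2)}\|_{L^1(P^m)}^{\theta}\|K_T^{(2)}\|_{L^{2}(P^m)}^{1-\theta}$ with $\|K_T^{(2)}\|_{L^{2}(P^m)}\leq\mathbb{\hat{E}}[|K_T^{(2)}|^2]^{1/2}<\infty$, and the uniform $L^q$-bound on $\sup_{t\leq T}|p_t^{P^m}|$, one sees the last term tends to $0$. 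Hence $\hat Y_0\geq E_{P^m}\big[\int_0^T p_t^{P^m}\,dK_t^{(2)}\big]$ for every $m$, and $m\to\infty$ gives $\hat Y_0\geq0$, i.e.\ $Y_0^{(1)}\geq Y_0^{(2)}$.

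The main difficulty is expected to be the construction and analysis of this dual linear FBSDE under $P$: its backward part is of the non-standard type of \cite{EH}, and one must establish not merely solvability by a weak-coupling fixed point but also the strict positivity $p^P>0$ and the $L^q(P)$-bounds uniform in $P$ — it is precisely these properties that make the two $K$-terms tractable, the $K^{(1)}$-term via positivity of $p^P$ and the $K^{(2)}$-term via the limiting procedure over the $\varepsilon$-optimal probabilities $P^m$. The remaining points — coincidence of $G$- and $P$-It\^{o} integrals, $P$-a.s.\ validity of the $G$-FBSDE, and the integrability making $E_P$ annihilate the local martingale — are routine quasi-sure analysis; it is the non-monotonicity of $\hat K$ that forces this detour through a fixed reference probability rather than the argument of \cite{HJPS}.
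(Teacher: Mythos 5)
Your part (i) is exactly the paper's argument and is fine. For part (ii) your overall architecture — a dual linear FBSDE of El Karoui--Huang type \cite{EH} under a reference measure, solved by the same weak-coupling fixed point as in Theorem \ref{th-3-1}, then It\^{o}'s formula on the pairing of the dual with $(\hat X,\hat Y)$ — is the paper's, but the step where you claim positivity of the scalar dual forward process is a genuine gap. You assert $p^P>0$ by viewing $p^P$ as an $O(L_2L_3)$-perturbation of the strictly positive geometric SDE obtained when $L_2L_3=0$. Smallness of that perturbation in $S^2(P)$ (or any $S^q(P)$) norm does not yield pointwise nonnegativity: the running infimum of the decoupled geometric solution over $[0,T]$ is a random variable with no deterministic positive lower bound, so no norm estimate of order $L_2L_3$ can prevent the perturbed path from crossing zero on a set of positive probability; indeed strict positivity should not be expected at all. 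What the sign argument needs is $p^P\geq 0$, and this requires a separate idea. The paper gets it by introducing $\tau=\inf\{t\geq 0: l_t=0\}\wedge T$ and observing that on $[\tau,T]$ both the restriction of the solution $(l,p,q,N)$ and the explicit quadruple $\bigl(l_T I_{\{\tau=T\}},\,l_T a_T^{(8)}I_{\{\tau=T\}},\,0,\,0\bigr)$ solve the same FBSDE (\ref{e-4-5}) with initial datum $l_\tau$; uniqueness (again from the weak-coupling fixed point) forces $l_t=l_T I_{\{\tau=T\}}$ on $[\tau,T]$, hence $l\geq 0$ on $[0,T]$. Without an argument of this kind your conclusion $\hat Y_0\geq 0$ is unsupported.

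There is a second, less serious divergence: the treatment of $K^{(2)}$. The paper chooses once and for all a measure $P^{\ast}\in\mathcal{P}$ with $K_T^{(2)}=0$ $P^{\ast}$-a.s. (weak compactness of $\mathcal{P}$ together with $\mathbb{\hat{E}}[K_T^{(2)}]=0$ and $K_T^{(2)}\leq 0$ gives a maximizing measure, and monotonicity then gives $K^{(2)}\equiv 0$ under $P^{\ast}$), so the duality identity contains only the $\Delta$-term and the $K^{(1)}$-term and no limiting procedure is needed. Your alternative — arbitrary $P$, $\varepsilon$-optimal measures $P^m$, H\"older plus interpolation — could in principle be carried out, but it needs strictly more than you provide: uniform-in-$P$ bounds on $\|\sup_{t\leq T}|p_t^{P}|\|_{L^q(P)}$ for some $q>2$, which the $L^2$ wellposedness of \cite{EH} and the fixed point as you set it up do not automatically give (the a priori constants and the smallness threshold $\delta$ would have to be re-derived at exponent $q$). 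That part is repairable at the cost of extra estimates; the positivity step, by contrast, needs the stopping-time/uniqueness argument (or an equivalent comparison-type argument), not a perturbation bound.
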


\begin{proof}
From the definition of $\Lambda_{p}$ in (\ref{e-3-1}) for $p\geq2$, it is easy
to deduce that there exists a $\delta_{1}>0$ depending on $n$, $T$, $L_{1}$,
$\bar{\sigma}$ and $\underline{\sigma}$ satisfying $\Lambda_{2}<1$. By Theorem
\ref{th-3-1}, we obtain (i) under the assumption $L_{2}L_{3}<\delta_{1}$.

We only prove the case $\phi_{1}(X_{T}^{(2)})\geq \phi_{2}(X_{T}^{(2)})$ for
(ii). The proof for $\phi_{1}(X_{T}^{(1)})\geq \phi_{2}(X_{T}^{(1)})$ is
similar. Under the assumption $L_{2}L_{3}<\delta_{1}$, it is clear that
$(X^{(i)},Y^{(i)},Z^{(i)},K^{(i)})$ is the $L^{2}$-solution of $G$-FBSDE
(\ref{e-4-1}) for $i=1$, $2$ under each $P\in \mathcal{P}$, where $\mathcal{P}$
is defined in Theorem \ref{th-2-1}. Since $\mathcal{P}$ is weakly compact and
$\mathbb{\hat{E}}[K_{T}^{(2)}]=0$ with $K_{T}^{(2)}\leq0$, there exists a
$P^{\ast}\in \mathcal{P}$ such that $K_{T}^{(2)}=0$ $P^{\ast}$-a.s. Noting that
$K^{(2)}$ is a non-increasing with $K_{0}^{(2)}=0$, we obtain $K^{(2)}=0$
under $P^{\ast}$. By (\ref{e-2-1}), we know that $d\langle B\rangle_{t}%
=\gamma_{t}dt$ q.s. with $\gamma_{t}\in \lbrack \underline{\sigma}^{2}%
,\bar{\sigma}^{2}]$.

Set $X_{t}^{(i)}=(X_{1,t}^{(i)},\ldots,X_{n,t}^{(i)})^{T}$ for $i=1$, $2$,
$\hat{X}_{t}=(\hat{X}_{1,t},\ldots,\hat{X}_{n,t})^{T}=X_{t}^{(1)}-X_{t}^{(2)}%
$, $\hat{Y}_{t}=Y_{t}^{(1)}-Y_{t}^{(2)}$, $\hat{Z}_{t}=Z_{t}^{(1)}-Z_{t}%
^{(2)}$. Since $(X^{(i)},Y^{(i)},Z^{(i)},K^{(i)})$ satisfies $G$-FBSDE
(\ref{e-4-1}) for $i=1$, $2$ under $P^{\ast}$, we obtain $P^{\ast}$-a.s.%
\begin{equation}
\left \{
\begin{array}
[c]{rl}%
d\hat{X}_{t}= & \left[  a^{(1)}(t)\hat{X}_{t}+a^{(2)}(t)\hat{Y}_{t}\right]
dt+\left[  a^{(3)}(t)\hat{X}_{t}+a^{(4)}(t)\hat{Y}_{t}\right]  dB_{t},\\
d\hat{Y}_{t}= & \left[  \langle a^{(5)}(t),\hat{X}_{t}\rangle+a^{(6)}%
(t)\hat{Y}_{t}+a^{(7)}(t)\hat{Z}_{t}\right]  dt+\hat{Z}_{t}dB_{t}+dK_{t}%
^{(1)},\\
\hat{X}_{0}= & 0\in \mathbb{R}^{n},\text{ }\hat{Y}_{T}=\langle a_{T}^{(8)}%
,\hat{X}_{T}\rangle+\phi_{1}(X_{T}^{(2)})-\phi_{2}(X_{T}^{(2)}),
\end{array}
\right.  \label{e-4-2}%
\end{equation}
where $a^{(1)}(t)=(a_{jk}^{(1)}(t))_{j,k=1}^{n}$ and $a^{(2)}(t)=(a_{1}%
^{(2)}(t),\ldots,a_{n}^{(2)}(t))^{T}$ with%
\[
a_{jk}^{(1)}(t)=\left[  b_{j}(t,k-1)-b_{j}(t,k)+\left(  h_{j}(t,k-1)-h_{j}%
(t,k)\right)  \gamma_{t}\right]  (\hat{X}_{k,t})^{-1}I_{\{ \hat{X}_{k,t}%
\not =0\}},
\]%
\[
a_{j}^{(2)}(t)=\left[  b_{j}(t,X_{t}^{(2)},Y_{t}^{(1)})-b_{j}(t,X_{t}%
^{(2)},Y_{t}^{(2)})+\left(  h_{j}(t,X_{t}^{(2)},Y_{t}^{(1)})-h_{j}%
(t,X_{t}^{(2)},Y_{t}^{(2)})\right)  \gamma_{t}\right]  (\hat{Y}_{t})^{-1}I_{\{
\hat{Y}_{t}\not =0\}},
\]%
\[
b_{j}(t,k)=b_{j}(t,X_{1,t}^{(2)},\ldots,X_{k,t}^{(2)},X_{k+1,t}^{(1)}%
,\ldots,X_{n,t}^{(1)},Y_{t}^{(1)}),
\]
similar for the definition of notations $b_{j}(t,k-1)$, $h_{j}(t,k-1)$,
$h_{j}(t,k)$, $a^{(3)}(t)$, $a^{(4)}(t)$, $a^{(5)}(t)$, $a^{(6)}(t)$,
$a^{(7)}(t)$ and $a_{T}^{(8)}$. From the assumption (H2), it is easy to verify
that%
\[
|a^{(1)}(t)|\leq nL_{1}(1+\bar{\sigma}^{2})\text{, }|a^{(2)}(t)|\leq
nL_{2}(1+\bar{\sigma}^{2})\text{, }|a^{(3)}(t)|\leq nL_{1}\text{, }%
|a^{(4)}(t)|\leq nL_{2}\text{,}%
\]%
\[
|a^{(5)}(t)|\leq L_{3}(1+\bar{\sigma}^{2})\text{, }|a^{(6)}(t)|+|a^{(7)}%
(t)|\leq L_{1}(1+\bar{\sigma}^{2})\text{, }|a_{T}^{(8)}|\leq L_{3}\text{.}%
\]

Consider the following FBSDE under $P^{\ast}$:%
\begin{equation}
\left \{
\begin{array}
[c]{rl}%
dl_{t}= & \left[  -a^{(6)}(t)l_{t}+\langle a^{(2)}(t),p_{t}\rangle
+\langle \gamma_{t}a^{(4)}(t),q_{t}\rangle \right]  dt-\gamma_{t}^{-1}%
a^{(7)}(t)l_{t}dB_{t},\\
dp_{t}= & \left[  l_{t}a^{(5)}(t)-a^{(1)}(t)p_{t}-\gamma_{t}a^{(3)}%
(t)q_{t}\right]  dt+q_{t}dB_{t}+dN_{t},\\
l_{0}= & 1,\text{ }p_{T}=l_{T}a_{T}^{(8)}\in \mathbb{R}^{n},
\end{array}
\right.  \label{e-4-3}%
\end{equation}
where $N$ is a $\mathbb{R}^{n}$-valued square integrable martingale with
$N_{0}=0$ such that each component of $N$ is orthogonal to $B$ under $P^{\ast
}$. By Theorem 6.1 in \cite{EH}, for each $(l_{t})_{t\leq T}\in S_{P^{\ast}%
}^{2}(0,T)$, the BSDE
\[
dp_{t}=\left[  l_{t}a^{(5)}(t)-a^{(1)}(t)p_{t}-\gamma_{t}a^{(3)}%
(t)q_{t}\right]  dt+q_{t}dB_{t}+dN_{t},\text{ }p_{T}=l_{T}a_{T}^{(8)}\text{,}%
\]
has a unique $L^{2}$-solution $(p,q,N)$ with $p\in S_{P^{\ast}}^{2}%
(0,T;\mathbb{R}^{n})$ and $q\in M_{P^{\ast}}^{2,2}(0,T;\mathbb{R}^{n})$, where
$S_{P^{\ast}}^{2}(0,T)$ (resp. $M_{P^{\ast}}^{2,2}(0,T)$) is the completion of
$S^{0}(0,T)$ (resp. $M^{0}(0,T)$) under the norm
\[
||\eta||_{S_{P^{\ast}}^{2}(0,T)}:=\left(  E_{P^{\ast}}\left[  \sup_{t\leq
T}|\eta_{t}|^{2}\right]  \right)  ^{1/2}\text{ }\left(  \text{resp. }%
||\eta||_{M_{P^{\ast}}^{2,2}(0,T)}:=\left(  E_{P^{\ast}}\left[  \int_{0}%
^{T}|\eta_{t}|^{2}dt\right]  \right)  ^{1/2}\right)  \text{.}%
\]
Similar to the proof of Theorem \ref{th-3-1}, we can deduce that there exists
a $\delta_{2}>0$ depending on $n$, $T$, $L_{1}$, $\bar{\sigma}$ and
$\underline{\sigma}$ such that FBSDE (\ref{e-4-3}) has a unique $L^{2}%
$-solution $(l,p,q,N)$ under the assumption $L_{2}L_{3}<\delta_{2}$.

Taking $\delta=\delta_{1}\wedge \delta_{2}$, we assume $L_{2}L_{3}<\delta$ in
the following. Applying It\^{o}'s formula to $\langle p_{t},\hat{X}_{t}%
\rangle-l_{t}\hat{Y}_{t}$ under $P^{\ast}$, we obtain%
\begin{equation}
\hat{Y}_{0}=E_{P^{\ast}}\left[  l_{T}\left(  \phi_{1}(X_{T}^{(2)})-\phi
_{2}(X_{T}^{(2)})\right)  -\int_{0}^{T}l_{t}dK_{t}^{(1)}\right]  .
\label{e-4-4}%
\end{equation}
Since $\phi_{1}(X_{T}^{(2)})\geq \phi_{2}(X_{T}^{(2)})$ and $dK_{t}^{(1)}\leq
0$, we only need to prove $l_{t}\geq0$ $P^{\ast}$-a.s. for $t\in \lbrack0,T]$.
Define the stopping time%
\[
\tau=\inf \{t\geq0:l_{t}=0\} \wedge T.
\]
It is clear that $l_{\tau}=0$ on $\{ \tau<T\}$ and $l_{T}\geq0$ on $\{
\tau=T\}$. Consider the following FBSDE on $[\tau,T]$ under $P^{\ast}$:%
\begin{equation}
\left \{
\begin{array}
[c]{rl}%
dl_{t}^{\prime}= & \left[  -a^{(6)}(t)l_{t}^{\prime}+\langle a^{(2)}%
(t),p_{t}^{\prime}\rangle+\langle \gamma_{t}a^{(4)}(t),q_{t}^{\prime}%
\rangle \right]  dt-\gamma_{t}^{-1}a^{(7)}(t)l_{t}^{\prime}dB_{t},\\
dp_{t}^{\prime}= & \left[  l_{t}^{\prime}a^{(5)}(t)-a^{(1)}(t)p_{t}^{\prime
}-\gamma_{t}a^{(3)}(t)q_{t}^{\prime}\right]  dt+q_{t}^{\prime}dB_{t}%
+dN_{t}^{\prime},\\
l_{\tau}^{\prime}= & l_{\tau},\text{ }p_{T}^{\prime}=l_{T}^{\prime}a_{T}%
^{(8)}\in \mathbb{R}^{n},\text{ }t\in \lbrack \tau,T].
\end{array}
\right.  \label{e-4-5}%
\end{equation}
It is easy to verify that%
\[
(l_{t}^{\prime},p_{t}^{\prime},q_{t}^{\prime},N_{t}^{\prime})_{t\in \lbrack
\tau,T]}=\left(  l_{T}I_{\{ \tau=T\}},l_{T}a_{T}^{(8)}I_{\{ \tau
=T\}},0,0\right)  _{t\in \lbrack \tau,T]}%
\]
satisfies FBSDE (\ref{e-4-5}). Obviously, $(l_{t}^{\prime},p_{t}^{\prime
},q_{t}^{\prime},N_{t}^{\prime})_{t\in \lbrack \tau,T]}=(l_{t},p_{t},q_{t}%
,N_{t}-N_{\tau})_{t\in \lbrack \tau,T]}$ satisfies FBSDE (\ref{e-4-5}). Since
the $L^{2}$-solution to FBSDE (\ref{e-4-5}) is unique, we obtain $l_{t}%
=l_{T}I_{\{ \tau=T\}}$ for $t\in \lbrack \tau,T]$. Thus $l_{t}\geq0$ $P^{\ast}%
$-a.s. for $t\in \lbrack0,T]$. By (\ref{e-4-4}), we get $\hat{Y}_{0}\geq0$,
which implies (ii).
\end{proof}

Suppose $n=1$ in the following and consider the following $G$-FBSDEs:%
\begin{equation}
\left \{
\begin{array}
[c]{rl}%
dX_{t}^{(i)}= & b(t,X_{t}^{(i)},Y_{t}^{(i)})dt+h(t,X_{t}^{(i)},Y_{t}%
^{(i)})d\langle B\rangle_{t}+\sigma(t,X_{t}^{(i)},Y_{t}^{(i)})dB_{t},\\
dY_{t}^{(i)}= & f(t,X_{t}^{(i)},Y_{t}^{(i)},Z_{t}^{(i)})dt+g(t,X_{t}%
^{(i)},Y_{t}^{(i)},Z_{t}^{(i)})d\langle B\rangle_{t}+Z_{t}^{(i)}dB_{t}%
+dK_{t}^{(i)},\\
X_{0}^{(i)}= & x_{i}\in \mathbb{R},\text{ }Y_{T}^{(i)}=\phi(X_{T}^{(i)}),\text{
}i=1,2.
\end{array}
\right.  \label{e-4-6}%
\end{equation}

\begin{theorem}
\label{th-4-2}Suppose that assumptions (H1) and (H2) hold with $n=1$ and
$\beta>2$. Then there exists a $\delta>0$ depending on $T$, $L_{1}$,
$\bar{\sigma}$ and $\underline{\sigma}$ such that the following results hold.

\begin{description}
\item[(i)] If $L_{2}L_{3}<\delta$, then $G$-FBSDE (\ref{e-4-6}) has a unique
$L^{2}$-solution $(X^{(i)},Y^{(i)},Z^{(i)},K^{(i)})$ for $i=1$, $2$.

\item[(ii)] If $L_{2}L_{3}<\delta$, $x_{1}\geq x_{2}$, $\phi(\cdot)$ is
non-decreasing, $f(\cdot)$ and $g(\cdot)$ are non-increasing in $x$, then we
have $Y_{0}^{(1)}\geq Y_{0}^{(2)}$.
\end{description}
\end{theorem}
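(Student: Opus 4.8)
The strategy is to reduce Theorem~\ref{th-4-2} to Theorem~\ref{th-4-1} by linearizing the difference equation and exploiting the sign conditions through the dual FBSDE. First I would observe that part (i) follows directly from Theorem~\ref{th-3-1}: choosing $\delta$ so that $\Lambda_{2}<1$ gives the unique $L^{2}$-solution for each $i$, exactly as in Theorem~\ref{th-4-1}(i); here one only needs $n=1$, so the dependence of $\delta$ simplifies accordingly. For part (ii) the plan is to introduce the intermediate $G$-FBSDE whose forward data are $(x_{1},\text{same coefficients})$ but whose terminal condition uses the data that allows a direct comparison. More precisely, since $\phi$ is non-decreasing and $f,g$ are non-increasing in $x$, I would compare $(X^{(1)},Y^{(1)},\ldots)$ with an auxiliary solution $(\bar X,\bar Y,\ldots)$ started from $x_{2}$ but with coefficients tilted so that Theorem~\ref{th-4-1}(ii) applies, and then handle the shift $x_{1}\ge x_{2}$ separately.

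The cleanest route is the following two-step comparison. Step one: keep the initial value $x_{2}$ fixed for both and compare terminal functions. Since $x_{1}\ge x_{2}$ and the forward SDE is the same, one does not yet know $X^{(1)}_{T}\ge X^{(2)}_{T}$ (there is no one-dimensional comparison for $G$-SDEs with $Y$-dependence for free), so instead I would directly set up the linearized system for $\hat X_{t}=X^{(1)}_{t}-X^{(2)}_{t}$, $\hat Y_{t}=Y^{(1)}_{t}-Y^{(2)}_{t}$, $\hat Z_{t}=Z^{(1)}_{t}-Z^{(2)}_{t}$ exactly as in \eqref{e-4-2}, but now with an extra inhomogeneous term coming from $\hat x=x_{1}-x_{2}\ge 0$ in the forward dynamics and with terminal condition $\hat Y_{T}=a^{(8)}_{T}\hat X_{T}+[\phi(X^{(1)}_{T})-\phi(X^{(2)}_{T})]$, where after writing $\phi(X^{(1)}_{T})-\phi(X^{(2)}_{T})=a^{(8)}_{T}\hat X_{T}$ via the mean value / Lipschitz quotient one should keep $a^{(8)}_{T}\ge 0$ because $\phi$ is non-decreasing. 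The key new sign inputs relative to Theorem~\ref{th-4-1} are: $a^{(5)}(t)\le 0$ and $a^{(8)}_{T}\ge 0$ (from $f,g$ non-increasing in $x$ and $\phi$ non-decreasing), and the forcing from $\hat x\ge 0$.

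Step two: solve the dual linear FBSDE under $P^{*}$ as in \eqref{e-4-3}, choosing $P^{*}\in\mathcal P$ so that $K^{(2)}_{T}=0$ $P^{*}$-a.s.\ (weak compactness plus $\hat{\mathbb E}[K^{(2)}_{T}]=0$, $K^{(2)}_{T}\le 0$), hence $K^{(2)}\equiv 0$ under $P^{*}$. By the same fixed-point argument that proved solvability of \eqref{e-4-3}, the dual system has a unique $L^{2}$-solution $(l,p,q,N)$ with $l_{0}=1$, and the stopping-time argument $\tau=\inf\{t:l_{t}=0\}\wedge T$ together with uniqueness of the solution of the FBSDE restricted to $[\tau,T]$ forces $l_{t}=l_{T}I_{\{\tau=T\}}\ge 0$ $P^{*}$-a.s. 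Applying It\^o's formula to $p_{t}\hat X_{t}-l_{t}\hat Y_{t}$ under $P^{*}$ yields a representation
\[
\hat Y_{0}=E_{P^{*}}\!\left[l_{T}\big(\phi(X^{(1)}_{T})-\phi(X^{(2)}_{T})\big)-\int_{0}^{T}l_{t}\,dK^{(1)}_{t}+(\text{terms in }\hat x)\right].
\]
Every summand is nonnegative: $l_{T}\ge 0$, $\phi(X^{(1)}_{T})-\phi(X^{(2)}_{T})=a^{(8)}_{T}\hat X_{T}$ has not been shown to be signed pointwise, so here I would instead linearize $\phi$ only through its Lipschitz bound and absorb the resulting term into the dual equation's terminal data, exactly mirroring how $a^{(8)}_{T}$ enters \eqref{e-4-3}; the forcing from $\hat x\ge 0$ enters the forward equation for $\hat X$ with coefficient that, after being moved into the dual backward equation via the adjoint, contributes a term of the form $E_{P^{*}}[\,l_{t}(\cdot)\hat x\,]$ whose sign is controlled by $l_{t}\ge 0$ and by the monotonicity of $b,h,f,g,\phi$ in $x$ (this is where the one-dimensional hypothesis $n=1$ and the full set of monotonicity assumptions are used, just as in Wu's classical comparison theorem \cite{Wu}). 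Finally $dK^{(1)}_{t}\le 0$ and $l_{t}\ge 0$ give $-\int_{0}^{T}l_{t}\,dK^{(1)}_{t}\ge 0$, so $\hat Y_{0}\ge 0$, i.e.\ $Y^{(1)}_{0}\ge Y^{(2)}_{0}$.

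\textbf{Main obstacle.} The delicate point is the treatment of the initial-value shift $x_{1}\ge x_{2}$: unlike in Theorem~\ref{th-4-1}, where both forward equations start from the same $x_{0}$, here $\hat X_{0}=x_{1}-x_{2}\ne 0$, so the linearized forward equation \eqref{e-4-2} acquires a nonzero initial condition and the It\^o computation for $p_{t}\hat X_{t}-l_{t}\hat Y_{t}$ picks up the boundary term $p_{0}\hat X_{0}=p_{0}(x_{1}-x_{2})$. One must therefore show $p_{0}\ge 0$ (componentwise, but $n=1$ here), which requires a second sign analysis of the dual process $p$ — using $a^{(5)}\le 0$, $a^{(8)}_{T}\ge 0$, $l\ge 0$, and the structure of the $p$-equation in \eqref{e-4-3} — analogous to the $l\ge 0$ argument via a stopping time $\sigma=\inf\{t:p_{t}=0\}$ and uniqueness of the restricted FBSDE. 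Making this second sign argument rigorous under only the weak-coupling smallness of $L_{2}L_{3}$, so that the dual FBSDE remains well-posed on every sub-interval, is the crux; everything else is a routine adaptation of the proof of Theorem~\ref{th-4-1}.
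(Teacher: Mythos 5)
Your overall architecture is the paper's: part (i) from Theorem \ref{th-3-1}; for part (ii) linearize the difference system with $\hat X_0=x_1-x_2$ and $\hat Y_T=a^{(8)}_T\hat X_T$, record the sign conditions $a^{(5)}\le 0$, $a^{(8)}_T\ge 0$, pick $P^{*}$ with $K^{(2)}_T=0$, solve the dual FBSDE \eqref{e-4-3}, get $l\ge 0$ by the stopping-time/uniqueness argument, and apply It\^o's formula to $p_t\hat X_t-l_t\hat Y_t$ to obtain $\hat Y_0=p_0(x_1-x_2)+E_{P^{*}}[-\int_0^T l_t\,dK^{(1)}_t]$. You also correctly isolate the crux: one must show $p_0\ge 0$. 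Two remarks on presentation before the main point: since both equations share the same $\phi$, the terminal difference is absorbed entirely into $a^{(8)}_T\hat X_T$ and there is no leftover term $l_T(\phi_1-\phi_2)$ in the representation; and the initial shift does not produce a distributed "forcing" term $E_{P^{*}}[l_t(\cdot)\hat x]$ --- the linearized forward equation is homogeneous, so $\hat x$ enters only through the boundary term $p_0\hat X_0$, which you do identify in your last paragraph.

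The genuine gap is your proposed mechanism for $p_0\ge 0$. You suggest repeating the $l\ge 0$ argument with $\sigma=\inf\{t:p_t=0\}$ and uniqueness of the FBSDE restricted to a subinterval. That argument works for $l$ because $l$ is the \emph{forward} component of a linear homogeneous system: once $l_\sigma=0$, the identically-zero quadruple solves the restricted system forward from $\sigma$. It fails for $p$, which is the \emph{backward} component of a BSDE carrying the inhomogeneous source $l_ta^{(5)}(t)$: knowing $p_\sigma=0$ does not make the zero process a solution on $[0,\sigma]$ (the driver does not vanish at $(p,q)=(0,0)$ unless $l_ta^{(5)}(t)\equiv 0$), and in any case positivity of a backward process is not propagated from an interior zero in this way. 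The correct tool, and the one the paper uses, is the comparison theorem for BSDEs under $P^{*}$ (with the orthogonal martingale part, in the El Karoui--Huang framework \cite{EH}): once $(l,q)$ is fixed, compare the linear BSDE
\begin{equation*}
dp_{t}=\left[  l_{t}a^{(5)}(t)-a^{(1)}(t)p_{t}-\gamma_{t}a^{(3)}(t)q_{t}\right]  dt+q_{t}dB_{t}+dN_{t},\qquad p_{T}=l_{T}a_{T}^{(8)},
\end{equation*}
with the homogeneous one having zero terminal data, whose solution is $\tilde p\equiv 0$; since $l_ta^{(5)}(t)\le 0$ and $l_Ta^{(8)}_T\ge 0$, comparison gives $p_0\ge\tilde p_0=0$. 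With that substitution your argument closes; as written, the step you flag as the crux would not go through.
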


\begin{proof}
The proof is similar to the proof of Theorem \ref{th-4-1}. For the convenience
of the reader, we sketch the proof. (i) is obvious. For (ii), we can similarly
find a $P^{\ast}\in \mathcal{P}$ such that $K_{T}^{(2)}=0$ $P^{\ast}$-a.s. The
equation (\ref{e-4-2}) is rewritten as the following equation: $P^{\ast}$-a.s.%
\begin{equation}
\left \{
\begin{array}
[c]{rl}%
d\hat{X}_{t}= & \left[  a^{(1)}(t)\hat{X}_{t}+a^{(2)}(t)\hat{Y}_{t}\right]
dt+\left[  a^{(3)}(t)\hat{X}_{t}+a^{(4)}(t)\hat{Y}_{t}\right]  dB_{t},\\
d\hat{Y}_{t}= & \left[  a^{(5)}(t)\hat{X}_{t}+a^{(6)}(t)\hat{Y}_{t}%
+a^{(7)}(t)\hat{Z}_{t}\right]  dt+\hat{Z}_{t}dB_{t}+dK_{t}^{(1)},\\
\hat{X}_{0}= & x_{1}-x_{2},\text{ }\hat{Y}_{T}=a_{T}^{(8)}\hat{X}_{T},
\end{array}
\right.  \label{e-4-7}%
\end{equation}
where the notations $a^{(1)}(t)$, $a^{(2)}(t)$, $a^{(3)}(t)$, $a^{(4)}(t)$,
$a^{(5)}(t)$, $a^{(6)}(t)$ and $a^{(7)}(t)$ are the same as the notations in
the proof of Theorem \ref{th-4-1} under $n=1$,%
\[
a_{T}^{(8)}=\left[  \phi(X_{T}^{(1)})-\phi(X_{T}^{(1)})\right]  (\hat{X}%
_{T})^{-1}I_{\{ \hat{X}_{T}\not =0\}}.
\]
Since $\phi(\cdot)$ is non-decreasing, $f(\cdot)$ and $g(\cdot)$ are
non-increasing in $x$, it is easy to verify that
\begin{equation}
a_{T}^{(8)}\geq0\text{ and }a^{(5)}(t)\leq0\text{ for }t\in \lbrack0,T].
\label{e-4-8}%
\end{equation}
Applying It\^{o}'s formula to $p_{t}\hat{X}_{t}-l_{t}\hat{Y}_{t}$ under
$P^{\ast}$, where $(l,p,q,N)$ is the $L^{2}$-solution of FBSDE (\ref{e-4-3})
under $n=1$, we obtain%
\[
\hat{Y}_{0}=p_{0}(x_{1}-x_{2})+E_{P^{\ast}}\left[  -\int_{0}^{T}l_{t}%
dK_{t}^{(1)}\right]  .
\]
We have obtained $l_{t}\geq0$ $P^{\ast}$-a.s. for $t\in \lbrack0,T]$ in the
proof of Theorem \ref{th-4-1}. Thus we get%
\begin{equation}
\hat{Y}_{0}\geq p_{0}(x_{1}-x_{2}). \label{e-4-9}%
\end{equation}
By (\ref{e-4-8}), we have%
\[
l_{T}a_{T}^{(8)}\geq0\text{ and }l_{t}a^{(5)}(t)\leq0\text{ for }t\in
\lbrack0,T].
\]
By comparison theorem for BSDEs
\[
dp_{t}=\left[  l_{t}a^{(5)}(t)-a^{(1)}(t)p_{t}-\gamma_{t}a^{(3)}%
(t)q_{t}\right]  dt+q_{t}dB_{t}+dN_{t},\text{ }p_{T}=l_{T}a_{T}^{(8)},
\]
and%
\[
d\tilde{p}_{t}=\left[  -a^{(1)}(t)\tilde{p}_{t}-\gamma_{t}a^{(3)}(t)\tilde
{q}_{t}\right]  dt+\tilde{q}_{t}dB_{t}+d\tilde{N}_{t},\text{ }\tilde{p}%
_{T}=0,
\]
we get $p_{0}\geq \tilde{p}_{0}=0$. Thus, from (\ref{e-4-9}), we deduce
$\hat{Y}_{0}\geq0$, which implies (ii).
\end{proof}

\bigskip


\begin{thebibliography}{99}                                                                                               %


\bibitem {An}F. Antonelli, Backward-forward stochastic differential equations,
Ann. Appl. Probab., 3 (1993), 777-793.

\bibitem {ALP}M. Avellaneda, A. Levy, A. Paras, Pricing and hedging derivative
securities in markets with uncertain volatilities, Appl. Math. Finance, 2
(1995), 73-88.

\bibitem {CST}P. Cheridito, H. Soner, N. Touzi, N. Victoir, Second order
backward stochastic differential equations and fully nonlinear parabolic pdes,
Commun. Pure Appl. Math., 60(7) (2007), 1081-1110.

\bibitem {De}F. Delarue, On the existence and uniqueness of solutions to
FBSDEs in a non-degenerate case, Stoch. Process. Appl., 99 (2002), 209-286.

\bibitem {DHP11}L. Denis, M. Hu, S. Peng, Function spaces and capacity related
to a sublinear expectation: application to $G$-Brownian motion paths,
Potential Anal., 34 (2011), 139-161.

\bibitem {EH}N. El Karoui, S. Huang, A general result of existence and
uniqueness of backward stochastic differential equations, in Backward
Stochastic Differential Equations, N. El Karoui, and L. Mazliak, eds., Pitman
Res. Notes Math. Ser., 364, Longman, Harlow, 1997, 27-36.

\bibitem {HJPS1}M. Hu, S. Ji, S. Peng, Y. Song, Backward stochastic
differential equations driven by G-Brownian motion, Stochastic Process. Appl.,
124 (2014), 759-784.

\bibitem {HJPS}M. Hu, S. Ji, S. Peng, Y. Song, Comparison theorem, Feynman-Kac
formula and Girsanov transformation for BSDEs driven by G-Brownian motion,
Stochastic Process. Appl., 124 (2014), 1170-1195.

\bibitem {HJX}M. Hu, S. Ji, X. Xue, A global stochastic maximum principle for
fully coupled forward-backward stochastic systems, SIAM J. Control, Optim.,
56(6) (2018), 4309-4335.

\bibitem {HJX1}M. Hu, S. Ji, X. Xue, The existence and uniqueness of viscosity
solution to a kind of Hamilton-Jacobi-Bellman equation, SIAM J. Control
Optim., 57 (2019), 3911-3938.

\bibitem {HP09}M. Hu, S. Peng, On representation theorem of G-expectations and
paths of $G$-Brownian motion, Acta Math. Appl. Sin. Engl. Ser., 25 (2009), 539-546.

\bibitem {HYH}Y. Hu, Y. Lin, A. S. Hima, Quadratic backward stochastic
differential equations driven by G-Brownian motion: Discrete solutions and
approximation, Stochastic Process. Appl., 128 (2018), 3724-3750.

\bibitem {HuP}Y. Hu, S Peng, Solution of forward-backward stochastic
differential equations, Probab. Theory Related Fields, 103(2) (1995), 273-283.

\bibitem {LRT}Y. Lin, Z. Ren, N. Touzi, J. Yang, Second order backward SDE
with random terminal time, Electron. J. Probab., 25 (2020), 1-43.

\bibitem {Liu}G. Liu, Multi-dimensional BSDEs driven by G-Brownian motion and
related system of fully nonlinear PDEs, Stoch. Int. J. Probab. Stoch.
Process., 92(6) (2019), 1-25.

\bibitem {LS}H. Lu, Y. Song, Forward-backward stochastic differential
equations driven by G-Brownian motion, arXiv:2104.06868v1, (2021).

\bibitem {Ly}T. Lyons, Uncertain volatility and the risk-free synthesis of
derivatives, Appl. Math. Finance, 2 (1995), 117-133.

\bibitem {MPY}J. Ma, P. Protter, J. Yong, Solving forward-backward stochastic
differential equations explicitly-a four step scheme, Probab. Theory Related
Fields, 98(2) (1994), 339-359.

\bibitem {MWZZ}J.Ma, Z.Wu, D. Zhang, J. Zhang, On well-posedness of
forward-backward SDEs-A unified approach, Ann. Appl. Probab., 25(4) (2015), 2168-2214.

\bibitem {MY}J. Ma, J. Yong, Forward-backward stochastic differential
equations and their applications, Springer Science \& Business Media, (1999).

\bibitem {P-T}E. Pardoux, S. Tang, Forward-backward stochastic differential
equations and quasilinear parabolic PDEs, Probab. Theory Related Fields,
114(2) (1999), 123-150.

\bibitem {P07a}S. Peng, $G$-expectation, $G$-Brownian Motion and Related
Stochastic Calculus of It\^{o} type, Stochastic analysis and applications,
Abel Symp., Vol. 2, Springer, Berlin, 2007, 541-567.

\bibitem {P08a}S. Peng, Multi-dimensional $G$-Brownian motion and related
stochastic calculus under $G$-expectation, Stochastic Process. Appl., 118
(2008), 2223-2253.

\bibitem {P2019}S. Peng, Nonlinear Expectations and Stochastic Calculus under
Uncertainty, Springer (2019).

\bibitem {PW}S. Peng, Z. Wu, Fully coupled forward-backward stochastic
differential equation and applications to optimal control, SIAM J. Control
Optim., 37(3) (1999), 825-843.

\bibitem {STZ}H. M. Soner, N. Touzi, J. Zhang, Martingale Representation
Theorem under G-expectation, Stochastic Process. Appl., 121 (2011), 265-287.

\bibitem {STZ11}H. M. Soner, N. Touzi, J. Zhang, Wellposedness of Second Order
Backward SDEs, Probab. Theory Related Fields, 153 (2012), 149-190.

\bibitem {Song11}Y. Song, Some properties on G-evaluation and its applications
to G-martingale decomposition, Sci. China Math., 54(2) (2011), 287-300.

\bibitem {WY}B. Wang, M. Yuan, Forward-backward stochastic differential
equations driven by G-Brownian motion, Appl. Math. Comput., 349 (2019), 39-47.

\bibitem {Wu}Z. Wu, The comparison theorem of FBSDE, Stat. Probab. Lett.,
44(1) (1999), 1-6.

\bibitem {Yong}J. Yong, Finding adapted solution of forward-backward
stochastic differential equations-method of continuation, Probab. Theory
Related Fields, 107(4) (1997), 537-572.

\bibitem {Zh}G. Zheng, Local wellposedness of coupled backward stochastic
differential equations driven by G-Brownian motions, J. Math. Anal. Appl.,
506(1) (2022), 1-18.
\end{thebibliography}
\end{document}